    \def\qed{\hfill$\sqcap\kern-8.0pt\hbox{$\sqcup$}$\\}
    \def\beq{\begin{eqnarray}}
    \def\eeq{\end{eqnarray}}
    \def\beqq{\begin{eqnarray*}}
    \def\eeqq{\end{eqnarray*}}
\DeclareMathOperator{\re}{Re}
\DeclareMathOperator{\im}{Im}
    \def\r{{\mathbb R}}
    \def\d{{\textnormal d}}
    \def\i{{\textnormal i}}
\newtheorem{theorem}{Theorem}
\newtheorem{lemma}{Lemma}
\theoremstyle{definition}
\title{On the rate of convergence of the Gaver-Stehfest algorithm}
\author{
Alexey Kuznetsov and Justin Miles 
\footnote{Dept. of Mathematics and Statistics,  York University,
4700 Keele Street, Toronto, ON, M3J 1P3, Canada.   Email: akuznets@yorku.ca,  justinm@mathstat.yorku.ca}
 }
\begin{document}

\maketitle

\begin{abstract}
The Gaver-Stehfest algorithm is widely used for numerical inversion of Laplace transform. In this paper we provide the first rigorous study of the rate of convergence of the Gaver-Stehfest algorithm. We prove that Gaver-Stehfest approximations converge exponentially fast if the target function is analytic in a neighbourhood of a point and they converge at a rate $o(n^{-k})$ if the target function is $(2k+3)$-times differentiable at a point.   
\end{abstract}

\vspace{0.25cm}

{\vskip 0.15cm}
 \noindent {\it Keywords}:  Gaver-Stehfest algorithm, inverse Laplace transform, rate of convergence, Lambert W-function, generating function
 \\
 \noindent {\it 2010 Mathematics Subject Classification }: Primary 65R10, Secondary 65B05


\section{Introduction and main results}


The Gaver-Stehfest algorithm for numerical inversion of Laplace transform has a long history. In 1966 Gaver  \cite{Gaver1966} has introduced simple (but rather slowly convergent) approximations for the inverse Laplace transform, and in 1970 Stehfest \cite{Stehfest1970,Stehfest2} has applied convergence accelleration to Gaver's approximation and thus the Gaver-Stehfest algorithm was born. The algorithm turned out to be very popular with practitioners due to a number of desirable properties: it is linear, it is exact for constant functions, all the coefficients can be computed explicitly and, most importantly, the algorithm
does not require the use of complex numbers, as it needs the values of the Laplace transform only on the positive real line. The price one has to pay for this latter feature is that the algorithm requires high-precision arithmetic for its implementation.

Let us present the Gaver-Stehfest algorithm. We start with a locally integrable function $f:(0,\infty)\mapsto \r$, such that its Laplace transform 
\begin{equation}
F(z):=\int_0^{\infty} e^{-zx} f(x) \d x 
\end{equation}
is finite for all $z>0$. We want to solve the following inverse problem: given the values of $F(z)$ for $z>0$, compute the value of $f(x)$ at a given point $x>0$. 
Gaver-Stehfest approximations are given by 
\begin{equation}\label{def_Gaver_Stehfest_apprx}
f_n(x):=\ln(2) x^{-1}
 \sum\limits_{k=1}^{2n} a_k(n) F\left(k  \ln(2) x^{-1} \right), \;\;\; n\ge 1, \; x>0,
\end{equation}
where
$$
a_k(n):=
  \frac{(-1)^{n+k}}{n!} \sum\limits_{j=[(k+1)/2]}^{\min(k,n)} j^{n+1}
{n \choose j}{2j \choose j}{j \choose k-j}, \;\; \; 1\le k \le 2n.  
$$
In \cite{Kuznetsov_2013} several conditions for convergence of $f_n(x_0)$ were established. It was proved that if $f$ has bounded variation or is 
H\"older continuous in a neighbourhood of $x_0>0$, then $f_n(x_0)$ converge to $(f(x_0+)+f(x_0-))/2$ as $n\to \infty$. The question of the rate of convergence was left open, and until now there were no rigorous results about the rate of convergence of the Gaver-Stehfest algorithm (although there were many numerical studies of the convergence of the algorithm -- see \cite{Whitt06,Davies1979,Jacquot1983,Masol2010,Valko2004629} and the references therein). It is the goal of this paper to provide the first rigorous treatment of the rate of convergence of the Gaver-Stehfest algorithm.  We establish the following two results:

\begin{theorem}\label{thm_main}
Assume that $f$ is analytic in a neighborhood of $x_0>0$. Then there exists $c>0$ such that 
\begin{equation}\label{exp_convergence}
f_n(x_0)=f(x_0)+O(e^{-cn}), \;\;\; n\to +\infty. 
\end{equation}
\end{theorem}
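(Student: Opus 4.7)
My strategy is to recast $f_n(x_0)$ as an integral of $f$ against an explicit kernel, derive sharp asymptotics of the kernel via the saddle-point method, and then exploit analyticity of $f$ by deforming a contour off the positive real axis. Substituting $F(z)=\int_0^\infty e^{-zu}f(u)\,\d u$ into \eqref{def_Gaver_Stehfest_apprx} and interchanging summation with integration gives
\[f_n(x_0)=\int_0^\infty f(u)\,K_n(u)\,\d u,\qquad K_n(u):=\alpha\sum_{k=1}^{2n}a_k(n)\,e^{-k\alpha u},\qquad \alpha:=\frac{\ln 2}{x_0}.\]
Inserting the closed form of $a_k(n)$, swapping the $j$- and $k$-sums so that $j$ is outermost, and carrying out the inner $k$-sum by the binomial theorem yields
\[K_n(u)=\frac{\alpha(-1)^n}{n!}\sum_{j=1}^n(-1)^j j^{n+1}\binom{n}{j}\binom{2j}{j}\bigl[e^{-\alpha u}(1-e^{-\alpha u})\bigr]^j.\]
Since the algorithm is exact on constants, $\int_0^\infty K_n(u)\,\d u=1$, so $f_n(x_0)-f(x_0)=\int_0^\infty K_n(u)\bigl[f(u)-f(x_0)\bigr]\,\d u$.

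Next I would obtain sharp asymptotics of $K_n$. The identity $\sum_j\binom{n}{j}j^{n+1}w^j=(n+1)![x^{n+1}](1+we^x)^n$ (immediate from $j^{n+1}=(n+1)![x^{n+1}]e^{jx}$) combined with $\binom{2j}{j}=\tfrac{4^j}{\pi}\int_0^\pi\cos^{2j}\theta\,\d\theta$ converts $K_n(u)$ into the double integral
\[K_n(u)=\frac{\alpha(-1)^n(n+1)}{\pi}\int_0^\pi \d\theta\cdot\frac{1}{2\pi\i}\oint\frac{(1-\Lambda(u,\theta)e^z)^n}{z^{n+2}}\,\d z,\quad \Lambda(u,\theta):=4\cos^2\theta\cdot e^{-\alpha u}(1-e^{-\alpha u}),\]
which is ripe for saddle-point analysis. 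The stationary equation in $z$ reduces to $\Lambda(1-z)e^z=1$; with $w=z-1$ this is the Lambert equation $we^w=-1/(\Lambda e)$, with saddle $z_s(u,\theta)=1+W(-1/(\Lambda e))$, explaining the paper's Lambert-W keyword. Substituting back into the exponent and performing Laplace's method in $\theta$ produces an effective rate $\Psi(u)$ so that $|K_n(u)|$ decays like $e^{n\Psi(u)}$ up to algebraic factors; direct checks at $u=x_0$ (where the $z$-saddle coalesces with the pole at $z=0$, equivalently $W$ is evaluated at its branch point $-1/e$) confirm $\Psi(x_0)=0$, while $\Psi(u)<0$ for $u\in(0,\infty)\setminus\{x_0\}$.

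To upgrade the resulting polynomial (in $1/\sqrt{n}$) concentration of $K_n$ near $x_0$ into exponential convergence, I would invoke analyticity of $f$. For $u$ in a small disk $|u-x_0|<r$, Cauchy's formula gives
\[f(u)-f(x_0)=\frac{u-x_0}{2\pi\i}\oint_{|\zeta-x_0|=r}\frac{f(\zeta)}{(\zeta-u)(\zeta-x_0)}\,\d\zeta;\]
interchanging the $u$- and $\zeta$-integrations lets one treat $u$ as a complex parameter in the double-integral representation of $K_n(u)$ and then deform the $u$-contour off the positive real axis into the analyticity region of $f$. Because $\Psi$ has a strict non-degenerate maximum at $u=x_0$, the deformation drops $\re\Psi$ strictly below $0$ uniformly along the new contour, yielding $|f_n(x_0)-f(x_0)|=O(e^{-cn})$ for some $c>0$; the tail $|u-x_0|>r$ is handled directly by the bound $|K_n(u)|\lesssim e^{-c_1 n}$. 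The main obstacle will be the rigorous execution of this double saddle-point analysis uniformly in a complex neighborhood of $u=x_0$: verifying that valid steepest-descent contours for the Lambert-W saddle exist and avoid its branch cut as well as the pole at $z=0$ uniformly in $(\theta,u)$, and that the extended rate $\Psi$ retains its strict maximum structure after the contour deformation in $u$.
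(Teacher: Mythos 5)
Your kernel computation is correct and reproduces the paper's starting point: your $K_n(u)$ is exactly $\alpha\, q_n\bigl(4e^{-\alpha u x_0/\ldots}\bigr)$ in the notation of \eqref{f_n_q_n_formula_1}--\eqref{def_qnv}, and the stationary equation $\Lambda(1-z)e^z=1$ correctly identifies the Lambert--$W$ saddle and its coalescence with the pole at $\Lambda=1$, i.e.\ at $u=x_0$, $\theta=0$. The genuine gap is in the final contour-deformation step, and it is not a technicality: the assertion that moving the $u$-contour off the positive real axis ``drops $\re\Psi$ strictly below $0$'' is false near $u=x_0$, and in fact the opposite happens. Writing $v(u)=4e^{-\alpha u}(1-e^{-\alpha u})=1-\alpha^2(u-x_0)^2+O((u-x_0)^3)$, the exponential rate of $K_n$ is governed by the two conjugate branches $W_{\pm}=-1\pm \i\sqrt{2}\,\alpha(u-x_0)+\cdots$ of $W(-1/(v e))$ at its branch point, and $|K_n(u)|$ decays like $\min_{\pm}|W_{\pm}|^{-n}$. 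On the real axis $|W_{\pm}|^2=1+2\alpha^2(u-x_0)^2$, giving your quadratic maximum $\Psi\approx-\alpha^2(u-x_0)^2$; but for $u=x_0+\i t$ one of the two branches has $|W|=1-\sqrt{2}\,\alpha|t|+O(t^2)<1$, so $|K_n(u)|$ grows like $e^{\sqrt{2}\,\alpha n|\im u|}$ the moment the contour leaves the real axis. Since any path from $x_0-r$ to $x_0+r$ that avoids $u=x_0$ must acquire nonzero imaginary part, no deformation can make the integrand pointwise exponentially small; indeed on the real axis itself $\sup_u|K_n(u)|=|K_n(x_0)|$ grows with $n$, so the exponential smallness of $f_n(x_0)-f(x_0)$ can only come from oscillatory cancellation, which a modulus bound on a deformed contour cannot see. (The point $u=x_0$ is not a saddle of a single harmonic rate function: $\Psi$ is the maximum of two harmonic branches crossing there, which is why the usual ``maximum on the real line $=$ saddle in $\C$'' picture fails.)

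To salvage the plan you would have to split $K_n$ into its two wave components proportional to $W_{+}^{-n}$ and $W_{-}^{-n}$, deform them into opposite half-planes, and control amplitudes and contours uniformly through the coalescence of the saddle with the pole and with the branch point of $W$ at $-1/e$ --- uniform asymptotics of Bleistein type in two variables $(u,\theta)$. That is essentially the content the paper packages differently: it forms the generating function $\Delta(z)=\sum f_n(x_0)(-1)^nz^n$ (so that the rate of decay of $f_n(x_0)$ becomes the radius of analyticity of $\Delta$), uses \eqref{eqn_generating_function_qn} to write $\Delta$ as an integral of $G(\cdot\,w(z))$ against $f$, isolates the only possible obstruction at $z=-1$ via the Puiseux expansion \eqref{expansion_W_near_1_over_e} and the splitting $H(z/e)=(1+z)^{-3/2}A(z)+B(z)$, and then shows in \eqref{eqn_Delta3_final} that after the substitution $u=\sqrt{1+w(z)(1-x)}$ the half-integer powers recombine into $\eta_1(z)=(z+1)\sqrt{\tilde w(z)}$, which is analytic in $z$ even though $\sqrt{1+w}$ is not analytic in $w$. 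That cancellation is exactly the interference your pointwise estimate discards. So the proposal, as written, does not yield the theorem; the decisive step is missing and the stated mechanism for it would fail.
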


\begin{theorem}\label{thm2}
Assume that $m\ge 5$ and $f$ is $m$-times differentiable at $x_0>0$. Set $k=[(m-3)/2]$. 
Then
\begin{equation}\label{power_convergence}
f_n(x_0)=f(x_0)+o(n^{-k}), \;\;\; n\to +\infty.
\end{equation}
\end{theorem}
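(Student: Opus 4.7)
The plan is to subtract the Taylor polynomial of $f$ at $x_0$ and invoke Theorem~\ref{thm_main} on the polynomial piece. Set $m = 2k+3$ and let $P_m$ denote the Taylor polynomial of $f$ of degree $m$ centred at $x_0$. Since $P_m$ is entire and its Laplace transform $\sum_i i!\, c_i/s^{i+1}$ is finite for every $s>0$, Theorem~\ref{thm_main} applies to $P_m$ and yields $(P_m)_n(x_0) = P_m(x_0) + O(e^{-cn}) = f(x_0) + O(e^{-cn})$. By linearity of the Gaver--Stehfest operator $f \mapsto f_n(x_0)$, it therefore suffices to prove that the remainder $R := f - P_m$, which by hypothesis satisfies $R(x) = o\bigl((x-x_0)^m\bigr)$ as $x\to x_0$, produces $R_n(x_0) = o(n^{-k})$.

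The next step is a kernel representation. Substituting $F(s) = \int_0^\infty e^{-sx} f(x)\,\d x$ into \eqref{def_Gaver_Stehfest_apprx} and exchanging summation and integration gives
$$
f_n(x_0) = \int_0^\infty K_n(v)\, f(x_0 v)\,\d v, \qquad K_n(v) := \ln(2)\sum_{k=1}^{2n} a_k(n)\, 2^{-kv},
$$
and the algebraic moments reduce to $\int_0^\infty K_n(v) v^j\,\d v = \tfrac{j!}{(\ln 2)^j}\sum_{k=1}^{2n} a_k(n)/k^{j+1}$. Swapping the order of the double sum in the definition of $a_k(n)$ collapses $\sum_{k=1}^{2n} a_k(n) z^k$ into a closed form in the variable $w=z(1-z)$; combined with a saddle-point analysis driven by the Lambert $W$-function (as anticipated by the paper's keywords), this should deliver the two quantitative estimates that power the proof: a moment bound
$$
\int_0^\infty |K_n(v)|\,|v-1|^m\,\d v = O(n^{-k}) \qquad (n\to\infty),
$$
together with an exponential concentration bound $\int_{|v-1|\ge\delta}|K_n(v)|\,\d v = O(e^{-c_\delta n})$ for every fixed $\delta>0$. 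These two estimates form the same technical core that underpins Theorem~\ref{thm_main}.

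With these ingredients the remainder argument is routine. Given $\varepsilon>0$, choose $\delta>0$ small enough that $|R(x_0 v)| \le \varepsilon\, x_0^m\,|v-1|^m$ whenever $|v-1|<\delta$. The contribution to $R_n(x_0)$ from $\{|v-1|<\delta\}$ is then dominated by $\varepsilon\, x_0^m \int_0^\infty |K_n(v)|\,|v-1|^m\,\d v = \varepsilon\cdot O(n^{-k})$, whereas the contribution from $\{|v-1|\ge\delta\}$ is controlled by combining the exponential decay of $\int_{|v-1|\ge\delta}|K_n(v)|\,\d v$ with a polynomial growth bound on $|R(x_0 v)|$ inherited from $f$ and $P_m$; this tail contribution is $O(e^{-cn})$ for some $c = c_\delta > 0$. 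Since $\varepsilon > 0$ is arbitrary, letting $\varepsilon\to 0$ after $n\to\infty$ yields $R_n(x_0) = o(n^{-k})$, which is the desired conclusion.

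The principal obstacle is the moment estimate $\int_0^\infty |K_n(v)|\,|v-1|^m\,\d v = O(n^{-k})$. Because $|K_n|$ (as opposed to $K_n$ itself) need not have bounded total mass, this bound is not a soft consequence of the concentration of $K_n$ near $v=1$; it demands a careful asymptotic analysis of the generating function for the Stehfest coefficients and uniform control over the binomial expansion of $(v-1)^m$. The delicate interplay between the order $m = 2k+3$ of differentiability and the rate $n^{-k}$ emerges precisely from the balance achieved in that asymptotic analysis.
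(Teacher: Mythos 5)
Your opening reduction (subtract the Taylor polynomial $P_m$, apply Theorem~\ref{thm_main} to it, and reduce to a remainder $R$ with $R(x)=o(|x-x_0|^m)$) is exactly the paper's first step and is fine. The problem is that everything after that rests on two estimates you assert but do not prove: the moment bound $\int_0^\infty |K_n(v)|\,|v-1|^m\,\d v=O(n^{-k})$ and the tail bound $\int_{|v-1|\ge\delta}|K_n(v)|\,\d v=O(e^{-c_\delta n})$. These are not routine: since $K_n(v)=\ln(2)\,q_n\bigl(4\cdot 2^{-v}(1-2^{-v})\bigr)$ and the only available a priori bound on $q_n$ is of the form $|q_n(s)|\le s\,n^{n+1}2^n/n!$ (see \eqref{bound_qn}), which is exponentially large in $n$, both estimates require a \emph{uniform} asymptotic bound of the type $|q_n(s)|\le Cn\,e^{-cn\sqrt{1-s}}$ for $s\in[0,1]$. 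Proving such a bound means extracting coefficient asymptotics of $G(sze^{z})$ uniformly as the dominant singularity (located where $s\,ze^{z}=-1/e$) approaches the unit circle as $s\uparrow 1$ --- a uniform saddle-point/transition analysis near a branch point of order $3/2$. You correctly identify this as ``the principal obstacle,'' but an obstacle identified is not an obstacle overcome; as written, the entire analytic content of the theorem is missing. (There is also a secondary gap in the tail term: the hypotheses give no pointwise polynomial growth bound on $f$, only finiteness of $F(z)$ for $z>0$, so controlling $\int_{|v-1|\ge\delta}|K_n(v)||f(x_0v)|\,\d v$ needs the refined uniform bound on $q_n(s)$ down to $s=0$ combined with an averaged growth estimate, not the crude bound \eqref{bound_qn}.)

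It is worth noting that the paper deliberately avoids this route. Instead of estimating the kernel $K_n$ pointwise, it studies the generating function $\Delta(z)=\sum_n f_n(x_0)(-1)^nz^n$: after the same Taylor reduction, it shows (via Theorem~\ref{thm3}, Lemmas~\ref{lemma11}--\ref{lemma_Bruno} and the bound $\phi(x)=O(x^{m/2})$) that $\Delta^{(k)}$ is bounded on $D_1(0)$ and continuous on the boundary away from $z=-1$, and then reads off $n^kf_n(x_0)\to 0$ from the Riemann--Lebesgue lemma. This trades the uniform coefficient asymptotics your plan requires for a boundedness estimate of a single function near a single boundary singularity, which is substantially easier. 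Your approach, if the moment bound could be pushed to $O(n^{-m})$ (which the heuristic width-$1/n$, height-$n$ picture of the kernel suggests), would actually prove the paper's Conjecture and is therefore worth pursuing --- but in its current form it is a program, not a proof.
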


The above two theorems lead to two natural problems: determine the largest values of $c$ and $k$ in \eqref{exp_convergence} and \eqref{power_convergence}. The first problem, that is trying to determine the largest value of $c$ in \eqref{exp_convergence} is likely to be very hard and we do not have any intuition as to what the answer may be. For the second problem we do have the following conjecture, supported by a number of numerical experiments

\vspace{0.25cm}
\noindent
{\bf Conjecture:} 
{\it 
If $f$ is $m$-times differentiable at $x_0>0$ then
$$
f_n(x_0)=f(x_0)+O(n^{-m}), \;\;\; n\to +\infty.
$$
}

We arrived at this conjecture by investigating the rate of convergence of the Gaver-Stehfest approximations to functions of the form
\begin{equation}\label{function_f}
f(x)=(x-1)^{\alpha} e^{-\beta x} \times {\mathbf 1}_{\{ x> 1\}},
\end{equation}
where $\alpha>0$ and $\beta \in {\mathbb C}$ with $\re(\beta)\ge 0$. This function clearly satisfies $f(1)=0$ and is $m$-times differentiable at $x=1$ for any integer $m<\alpha$. The corresponding Laplace transform is 
easily computed explicitly
$$
F(z)=\int_0^{\infty} f(x) e^{-zx} \d x=\int_1^{\infty} (x-1)^{\alpha} e^{-(\beta+z)x} \d x =\Gamma(\alpha+1) (\beta+z)^{-\alpha-1} e^{-\beta-z}, \;\;\; z>0. 
$$
To find the optimal value of $k$ in \eqref{power_convergence} we computed Gaver-Stehfest approximations $f_n(1)$ for $1\le n \le 300$ (using high-precision arithmetic) and then we used linear regression to compute $k$ that provides the best fit for $\ln|f_n(1)| \sim C-k \ln(n)$, $1\le n \le 300$. This procedure was repeated many times with different values of parameters $\alpha$ and $\beta$ and the above conjecture seems to hold true for all functions of the form \eqref{function_f}.

The paper is organized as follows. In Section \ref{section_preliminary_results} we 
state and prove Theorem \ref{thm3}, which is the foundation of our approach. In Section \ref{section_proof_thm1} we prove Theorem \ref{thm_main} and in Section \ref{section_proof_thm2} we prove Theorem \ref{thm2}.


\section{Preliminary results}\label{section_preliminary_results}


Let us review some properties of the Lambert W-function, which will be needed later.
The principal branch of the Lambert W-function, denoted by $W(z)$,
 is an analytic function in the neighborhood of $z=0$ that satisfies $W(z)\exp(W(z))=z$.   
  It is well-known   \cite{Corless96} that $W$ is analytic in ${\mathbb C}\setminus (-\infty,-e^{-1}]$, 
and it has the following Taylor series at $z=0$ (see formula (3.1) in \cite{Corless96})
\beq\label{W_series}
W(z)=\sum\limits_{n\ge 1} (-n)^{n-1} \frac{z^n}{n!}, \;\;\; \vert z \vert<1/e,
\eeq
and a branching singularity at $z=-1/e$
\beq\label{expansion_W_near_1_over_e}
W(z)= -1+p-\frac{p^2}{3}+\frac{11}{72} p^3-\frac{43}{540} p^4+ \frac{769}{17280}p^5 
+...=\sum_{n\ge 0} \mu_n p^n, 
\eeq
where $p=\sqrt{2(1+ez)}$ and the series converges for $|p|<\sqrt{2}$ (see formula (4.22) in \cite{Corless96}).
The coefficients $\mu_n$ are certain rational numbers that can be computed recursively (see formulas (4.23) and (4.24) in \cite{Corless96}).

We define
\begin{equation}\label{H_in_terms_of_W}
H(z):=-\left(z\frac{\d}{\d z} \right)^2 W(z)=-\frac{W(z)}{(1+W(z))^3}. 
\end{equation}
The second equality follows from the identity $zW'(z)=W(z)/(1+W(z))$, which can be easily derived from
the functional equation $W(z)\exp(W(z))=z$. 
Since $W$ is analytic in ${\mathbb C}\setminus (-\infty,-e^{-1}]$ and satisfies $W(0)=0$, it is clear from 
\eqref{H_in_terms_of_W} that $H$ is also analytic in ${\mathbb C}\setminus (-\infty,-e^{-1}]$ and satisfies $H(0)=0$.

From  \eqref{expansion_W_near_1_over_e} and \eqref{H_in_terms_of_W}  we derive series representation
\begin{equation}\label{Hz_series_expansion}
H(z)=p^{-3}-\frac{11}{24}p^{-1}-\frac{4}{135}-\frac{1}{1152} p-\frac{31}{405} p^2-\dots
= p^{-3}-\frac{11}{24}p^{-1} + \sum\limits_{n\ge 0} c_n p^n,
\end{equation}
where, as above, $p=\sqrt{2(1+ez)}$ and the series converges for $|p|<\sqrt{2}$. The coefficients $c_n$ in \eqref{Hz_series_expansion} are certain rational numbers that can be computed recursively using values of $\mu_n$. We define the following two functions in terms of coefficients $c_n$: 
\begin{align}
\label{def_Au}
A(u)&:=\frac{1}{2\sqrt{2}}-\frac{11}{24 \sqrt{2}}(1+u)+ \sum\limits_{n\ge 0}  c_{2n+1} 2^{n+1/2} (1+u)^{n+2},\\
\label{def_Bu}
B(u)&:=\sum\limits_{n\ge 0} c_{2n} 2^n (1+u)^n. 
\end{align}
Since the series in \eqref{Hz_series_expansion} converges for $|p|<\sqrt{2}$, we conclude that the series \eqref{def_Au} and \eqref{def_Bu} converge for $|1+u|<1$, thus functions $A$ and $B$ are analytic in the disk $D_{1}(-1)$: here and everywhere else in this paper we will denote  
$$
D_r(a):=\{z\in {\mathbb C} \; : \; |z-a|<r\}, 
$$
for $r>0$ and $a\in {\mathbb C}$.  By construction we have an identity
 \begin{equation}\label{eqn_H_decomposition}
H(z/e)=(1+z)^{-3/2}A(z)+B(z), 
\end{equation}
which is valid for $z\in D_1(-1) \setminus (-\infty, -1]$.

Next, given a function $f$ and $x_0>0$ we define
\begin{equation}\label{def_tilde_f}
\tilde f(v):=\frac{f(x_0 \log_{1/2}((1+v)/2))}{1+v}+\frac{f(x_0 \log_{1/2}((1-v)/2))}{1-v}, \;\;\; -1<v<1, 
\end{equation}
and 
\begin{equation}\label{def_phix}
\phi(x):=\frac{1}{\pi\sqrt{1-x}}  \int_0^{\frac{\pi}{2}}\tilde f(\sqrt{x} \sin(y)) \d y, \;\;\; 0\le x < 1. 
\end{equation} 
We also define $w(z):=ze^{z+1}$ and 
\begin{equation}\label{def_Delta3}
\Lambda(w)=\Lambda(w;\sigma):=\int_0^{\sigma} (1+w(1-x))^{-\frac{3}{2}} A(w(1-x)) \phi(x) \d x, 
\end{equation}
where $\sigma \in (0,1)$ and $A$ is defined in \eqref{def_Au}. For every $\sigma \in (0,1)$ the function 
$w\mapsto \Lambda(w;\sigma)$ is well-defined for $w \in D_{\delta}(-1)\setminus (-\infty,-1]$ for some $\delta=\delta(\sigma)>0$ small enough.  Finally, for $\epsilon>0$ we denote
\begin{equation}\label{def_D_epsilon}
{\mathcal D}_{\epsilon}:=D_{1+\epsilon}(0) \setminus D_{\epsilon^{1/4}}(-1) = \{ z\in {\mathbb C} \; : \; |z|<1+\epsilon \; {\textnormal{ and }} \; |1+z|>\epsilon^{1/4} \}.
\end{equation}

The main goal of this section is to establish the following result. 

\begin{theorem}\label{thm3}
Assume that $f(x_0)=0$. 
\begin{itemize}
\item[(i)]  The function 
\begin{equation}\label{def_Delta}
\Delta(z):=\sum\limits_{n\ge 1} f_n(x_0) (-1)^n z^n, 
\end{equation}
is analytic in ${\mathcal D}_{\epsilon}$ for $\epsilon<1/100$. 
\item[(ii)] For any $\sigma \in (0,1)$ the function $\Delta(z)-\Lambda(w(z);\sigma)$ is analytic in $D_{\delta}(-1)$ for some $\delta>0$ small enough. 
\end{itemize}
\end{theorem}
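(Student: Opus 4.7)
My plan is to derive a single closed-form integral representation
\begin{equation*}
\Delta(z) \;=\; \int_0^1 H\!\left(\frac{(1-x)\,w(z)}{e}\right)\phi(x)\,\d x, \qquad |z| \text{ small,}
\end{equation*}
and then deduce both parts by direct analysis. To establish this master formula I will first use the substitution $u=2^{-y/x_0}$ in the Laplace integrals defining $F$, rewriting $f_n(x_0) = \int_0^1 u^{-1}f(x_0\log_{1/2}(u))P_n(u)\,\d u$, where $P_n(u):=\sum_{k=1}^{2n}a_k(n)u^k$. Swapping the $j$- and $k$-sums in the definition of $a_k(n)$ and collapsing the inner $k$-sum via the binomial theorem yields $P_n(u)=\tfrac{(-1)^n}{n!}\sum_{j=1}^n j^{n+1}\binom{n}{j}\binom{2j}{j}(-1)^j[u(1-u)]^j$. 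Multiplying by $(-z)^n$, summing over $n$, and swapping summations reduces the $n$-sum via $\sum_{n\ge j}\tfrac{z^nj^{n+1}}{n!}\binom{n}{j}=\tfrac{j^{j+1}}{j!}z^je^{jz}$; inserting the Beta-integral $\binom{2j}{j}=\tfrac{2}{\pi}\int_0^{\pi/2}(2\sin y)^{2j}\,\d y$ and recognising $\sum_{j\ge 1}\tfrac{j^{j+1}}{j!}w^j = H(-w)$ (a direct consequence of \eqref{H_in_terms_of_W} and \eqref{W_series}) transforms the double sum into $(2/\pi)\int_0^{\pi/2}H(4u(1-u)\sin^2 y\cdot ze^z)\,\d y$. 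Splitting the $u$-integral at $u=1/2$ via $u=(1\pm v)/2$ produces $\tilde f(v)$, and the chain of substitutions $t=\sin y$, $s=t^2$, $r=(1-v^2)s$, $v=\sqrt{1-r}\sin\theta$, $x=1-r$, followed by a switch of integration order, brings the inner $\theta$-integral to $\pi\sqrt{1-x}\,\phi(x)$ via \eqref{def_phix}; using $ze^z=w(z)/e$ then yields the master formula.

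For part (i), since $w$ is entire and $1-x\ge 0$, the analyticity of $\Delta$ on $\mathcal{D}_\epsilon$ reduces to showing $w(z)\notin(-\infty,-1]$ for every $z\in\mathcal{D}_\epsilon$. On $\r$, elementary calculus gives $ze^{z+1}+1\ge 0$ with equality only at $z=-1$, so the pre-image curve $\mathcal{C}:=w^{-1}((-\infty,-1])$ meets $\r$ only at $-1$. Solving $\Im w(z)=0$ yields the explicit parametrization $\mathcal{C}=\{-y\cot y+iy : y\in(-\pi,\pi)\setminus\{0\}\}\cup\{-1\}$ (the other branches are $2\pi$-translates and lie outside $D_{1+\epsilon}(0)$), along which $|z|=|y|/|\sin y|$ and $|z+1|=|y|$. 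Hence $\mathcal{C}$ reaches $|z|=1+\epsilon$ at $|y|=y_\epsilon$ with $y_\epsilon/\sin y_\epsilon=1+\epsilon$, i.e., $y_\epsilon\sim\sqrt{6\epsilon}$, where $|z+1|=y_\epsilon<\epsilon^{1/4}$ as soon as $\sqrt{6\epsilon}<\epsilon^{1/4}$, i.e., $\epsilon<1/36$. Every point of $\mathcal{C}$ therefore lies either inside $D_{\epsilon^{1/4}}(-1)$ or outside $D_{1+\epsilon}(0)$, hence outside $\mathcal{D}_\epsilon$; dominated convergence then promotes the master formula to an analytic function on $\mathcal{D}_\epsilon$ for $\epsilon<1/100$.

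For part (ii), fix $\sigma\in(0,1)$ and apply \eqref{eqn_H_decomposition} to the integrand with $\tilde z=(1-x)w(z)$, then split at $x=\sigma$:
$$\Delta(z)=\Lambda(w(z);\sigma)+\int_0^\sigma B\bigl((1-x)w(z)\bigr)\phi(x)\,\d x+\int_\sigma^1 H\!\left(\frac{(1-x)w(z)}{e}\right)\phi(x)\,\d x.$$
For $z\in D_\delta(-1)$ with $\delta$ small enough and $x\in[0,\sigma]$, the argument $(1-x)w(z)$ lies in $D_1(-1)$ (since at $z=-1$ one has $|1+(1-x)w(-1)|=x\le\sigma<1$, and this estimate is stable under small perturbations of $z$), so $B$ is analytic on the relevant range and the middle integral is analytic in $z$. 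For $x\in[\sigma,1]$, $(1-x)w(z)/e$ stays in a small neighborhood of the segment $[-(1-\sigma)/e,0]$, which is bounded away from the branch cut $(-\infty,-1/e]$ of $H$, so the last integral is likewise analytic in $z$ near $-1$. The main obstacle is the global analysis in (i): the exponent $1/4$ in $\epsilon^{1/4}$ is finely calibrated to the order-two vanishing of $w'$ at $-1$ (so that $|w(z)+1|\asymp|z+1|^2$ locally), and the comparison $\sqrt{6\epsilon}<\epsilon^{1/4}$ is precisely what lets $\mathcal{C}$ be absorbed by the excluded disk before it can exit $D_{1+\epsilon}(0)$; a purely local expansion around $-1$ would not suffice.
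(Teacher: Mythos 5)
Your overall strategy is essentially the paper's: the generating-function identity you re-derive from $a_k(n)$ is Proposition 2.2 of \cite{Kuznetsov_2013} (your $P_n(u)$ is the paper's $q_n(4u(1-u))$, and $\sum_j j^{j+1}w^j/j!=H(-w)$ is how the paper's $G$ arises); your parametrization of $\{\im w(z)=0\}$ by $z=-y\cot(y)+\i y$ and the comparison with the excluded disk $D_{\epsilon^{1/4}}(-1)$ is exactly Lemma \ref{lemma3}; and your split of the integrand at $x=\sigma$ using \eqref{eqn_H_decomposition} is how the paper isolates $\Lambda$. The algebra of your master formula checks out formally. The one genuine gap is your decision to carry the representation $\Delta(z)=\int_0^1 H(e^{-1}(1-x)w(z))\phi(x)\,\d x$ over the \emph{full} interval $(0,1)$, and in particular the claim that $\int_\sigma^1 H(e^{-1}(1-x)w(z))\phi(x)\,\d x$ is analytic near $z=-1$ merely because the argument of $H$ avoids the branch cut. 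Analyticity of such a parameter integral (Lemma \ref{lemma4}) also requires the density to be integrable, and $\phi$ need not be integrable near $x=1$: the definition \eqref{def_tilde_f} gives $\tilde f(v)$ a factor $1/(1-v)$ multiplying values of $f$ at arguments tending to $+\infty$, and already for $f$ bounded away from zero at infinity one finds $\phi(x)\asymp(1-x)^{-1}$ as $x\uparrow 1$. Consequently neither the Fubini interchange that defines your master formula on the region $\{x>\sigma\}$ nor the analyticity of the tail integral follows as stated. The issue is repairable --- write $H(\zeta)=\zeta \tilde H(\zeta)$ with $\tilde H$ analytic at $0$ and pair the factor $(1-x)$ with $\phi(x)$, which \emph{is} integrable after that pairing --- but this is precisely the complication the paper's three-step decomposition $\Delta\to\Delta_1\to\Delta_2\to\Lambda$ is engineered to avoid: it converts to the $\phi$-form only on $(0,\sigma)$, and handles the $v$ near $1$ and $x$ near $1$ regions as $\Delta-\Delta_1$ and $\Delta_1-\Delta_2$, integrals of $G$ and $H$ against the integrable density $(1-v^2)\tilde f(v)$ with arguments confined to $c\,w(z)$, $c<1$, so Lemma \ref{lemma3}(i) yields analyticity in a full disk $D_R(0)$, $R>1$.

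A smaller point of rigor in part (i): on the curve $z=-y\cot(y)+\i y$ one has $|z+1|^2=(1-y\cot(y))^2+y^2$, not $|z+1|=|y|$; the correction is $O(y^4)$ and harmless, but to justify the concrete threshold on $\epsilon$ you need inequality versions of your asymptotics (the paper's \eqref{inequality1}--\eqref{inequality2}) rather than $y_\epsilon\sim\sqrt{6\epsilon}$, and you should also confirm monotonicity of $|z|$ and $|z+1|$ along the curve so that \emph{every} point with $|z|\le 1+\epsilon$, not just the endpoint, falls inside $D_{\epsilon^{1/4}}(-1)$.
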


Theorem \ref{thm3} will be our main tool in proving Theorems \ref{thm_main} and \ref{thm2}. We will apply it as follows: suppose we can show that for some $\sigma \in (0,1)$ the function $z\mapsto \Lambda(w(z);\sigma)$ is analytic in $D_{\delta}(-1)$ for some $\delta>0$. Then Theorem \ref{thm3} would imply that $\Delta(z)$ is analytic in $D_R(0)$ for some $R>1$. This latter fact combined with 
\eqref{def_Delta} would prove that the sequence $\{f_n(x_0)\}_{n\ge 1}$ converges to zero exponentially fast. Alternatively, if the function 
$\Lambda(w(z))$ is not analytic in $D_{\delta}(-1)$ for any $\delta>0$, it must have a singularity at $z=-1$, and then the behavior of $\Lambda(w(z))$ at this singularity (for example, the number of times $\Lambda(w(z))$ is differentiable at $z=-1$) would give us information about the singularity of $\Delta(z)$ at $z=-1$, and this informatoin coupled with \eqref{def_Delta} would again lead to estimates on the rate of convergence of the sequence $\{f_n(x_0)\}_{n\ge 1}$ to zero.

\begin{figure}
\centering
\subfloat[]{\label{fig1_p1}\includegraphics[height =6cm]{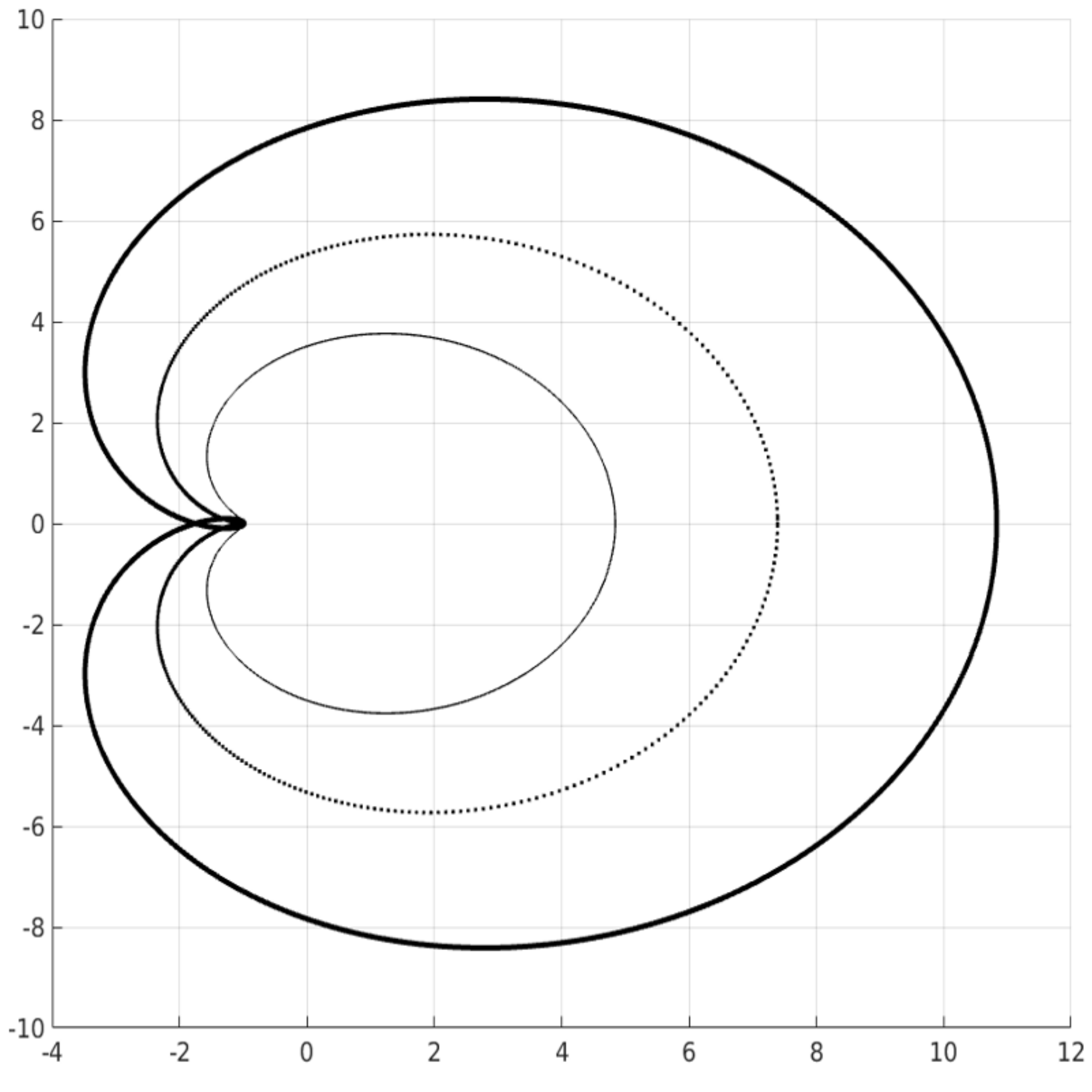}} 
\qquad
\subfloat[]{\label{fig1_p2}\includegraphics[height =6cm]{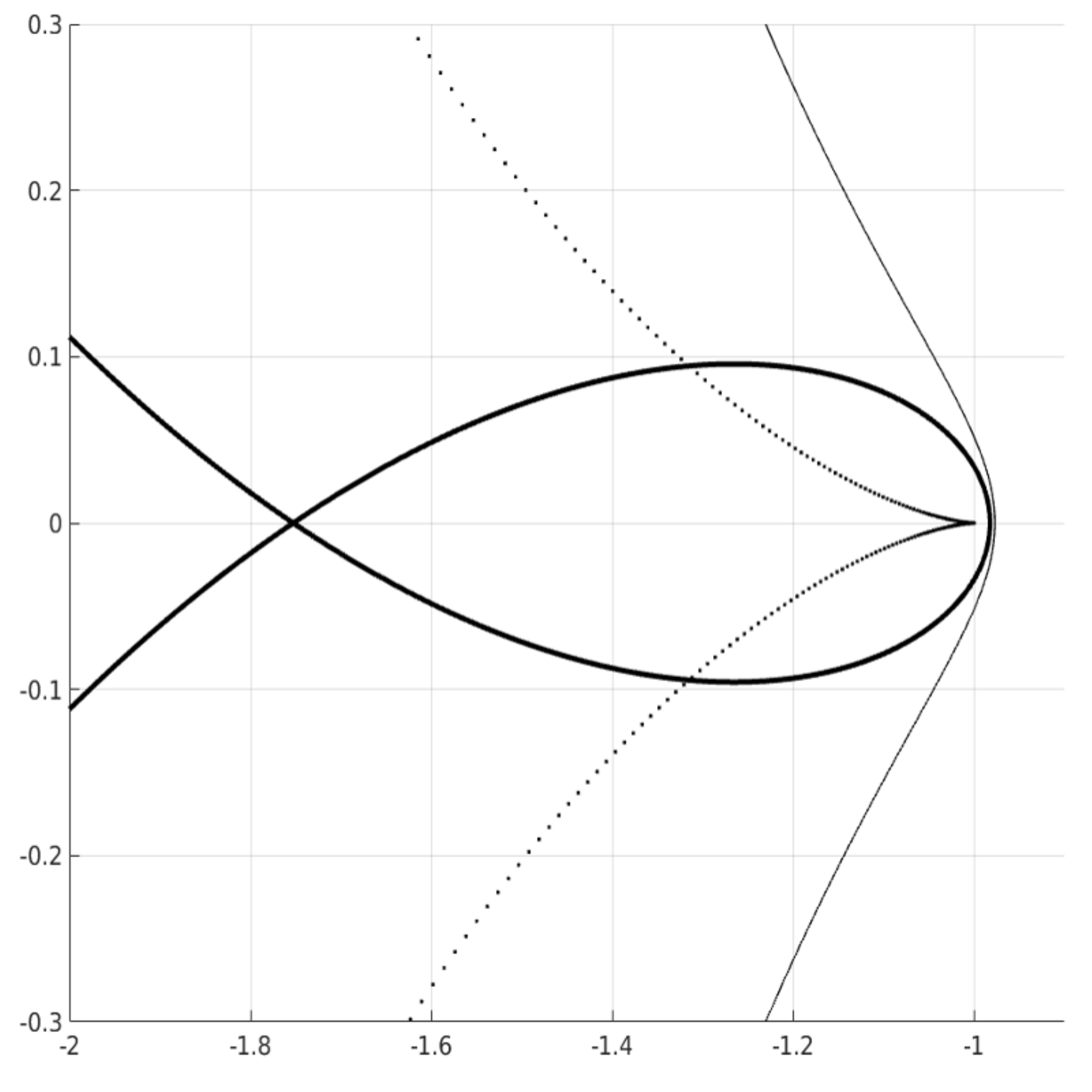}}
\caption{The images of three circles $|z|=0.8$ (thin line), $|z|=1$ (dotted line) and $|z|=1.2$ (thick line) under the map $z\mapsto w=ze^{z+1}$. Figure (b) magnifies the area near $w=-1$ of the figure (a). } 
\label{fig1}
\end{figure}

Before we prove Theorem \ref{thm3}, we need to establish a number of preliminary results. The next technical result 
collects some properties of the map $z\mapsto w=ze^{z+1}$ (see Figure \ref{fig1}).

\begin{lemma}\label{lemma3} Let $w(z)=ze^{z+1}$.
\begin{itemize}
\item[(i)] For any $c \in (0,1)$ there exists $R>1$ such that the function $c\times w(z)$ maps $D_R(0)$ into \\ ${\mathbb C}\setminus (-\infty,-1]$. 
\item[(ii)]
For $\epsilon \in (0,1/100]$ the function $w=w(z)$ maps the domain ${\mathcal  D}_{\epsilon}$
(defined in \eqref{def_D_epsilon}) into 
${\mathbb C}\setminus (-\infty,-1]$.
\end{itemize}
\end{lemma}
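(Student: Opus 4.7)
The plan is to reduce both parts to a single analysis of the preimage $w^{-1}(\mathbb{R})$. Writing $z = x + \i y$, I compute
\[
\im w(z) = e^{x+1}(x\sin y + y\cos y),
\]
so $w(z)\in\mathbb{R}$ if and only if either $z$ is real or ($\sin y \ne 0$ and $x = -y\cot y$). On this non-real ``ghost curve'' a direct substitution yields
\[
|z|^2 = \frac{y^2}{\sin^2 y}, \qquad w(z) = -\frac{y}{\sin y}\,e^{\,1-y\cot y}.
\]
Since $|\sin y|<|y|$ for $y\ne 0$, every non-real point of $w^{-1}(\mathbb{R})$ satisfies $|z|>1$ strictly. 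This single observation drives both parts.

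For part (i), it follows that $\overline{D_1(0)}\cap w^{-1}(\mathbb{R}) = [-1,1]$. On that interval the real restriction of $w$ has its only critical point at $z=-1$ (since $w'(z)=(1+z)e^{z+1}$), with $w(-1)=-1$ and $w(1)=e^2$, giving $w([-1,1])=[-1,e^2]$. For any $c\in(0,1)$ the ray $(-\infty,-1/c]$ lies strictly below $-1$ and is therefore disjoint from $w(\overline{D_1(0)})$. The closed preimage $w^{-1}((-\infty,-1/c])$ is thus separated from the compact disk $\overline{D_1(0)}$ by some positive distance $\delta$; any $R\in(1,1+\delta)$ completes the argument after scaling by $c$.

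For part (ii), I would fix $z\in\mathcal{D}_\epsilon$ and split on whether $z$ is real or on the ``ghost curve''. The real case is straightforward: from $w(-1)=-1$, $w'(-1)=0$, $w''(-1)=1$, the expansion $w(-1+t)=-1+t^2/2+O(t^3)$ shows that $w(z)>-1$ for every real $z\ne -1$, and $-1$ itself is excluded from $\mathcal{D}_\epsilon$. For non-real $z$ on the curve, $|y|\geq \pi$ already forces $|z|\geq |y|/|\sin y|\geq \pi>1+\epsilon$, so only the range $0<|y|<\pi$ matters. There I would Taylor expand
\[
|z|^2 - 1 = \frac{y^2}{3} + \frac{y^4}{15} + O(y^6), \qquad |z+1|^2 = y^2 + \frac{y^4}{9} + O(y^6),
\]
so to leading order $|z+1|^2 \approx 3(|z|^2-1)$. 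Combining $|z|<1+\epsilon$ (which forces $y^2<6\epsilon+3\epsilon^2$) with the second expansion yields $|z+1|^2 < 6\epsilon + O(\epsilon^2)$, strictly less than $\sqrt{\epsilon}=(\epsilon^{1/4})^2$ once $\epsilon\leq 1/100$. This forces $|z+1|<\epsilon^{1/4}$, contradicting $z\in\mathcal{D}_\epsilon$.

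The hard part is turning the last step into rigorous, non-asymptotic inequalities with the explicit constant $\epsilon\leq 1/100$. Concretely, I need clean bounds of the form $|z|^2 - 1 \geq y^2/3$ and $|z+1|^2 \leq y^2(1+y^2/9)+r(y)$ valid on an explicit interval of $|y|$, together with a verification that $6\epsilon + r(\sqrt{6\epsilon}) < \sqrt{\epsilon}$ holds throughout $\epsilon\in(0,1/100]$. The exponent $1/4$ in the definition of $\mathcal{D}_\epsilon$ is tuned precisely so that the relation $|z+1|\sim\sqrt{3(|z|^2-1)}$ on the ``ghost curve'' converts $|z|<1+\epsilon$ into $|z+1|<\epsilon^{1/4}$ with room to spare, and $1/100$ appears to be the largest round value for which the leftover remainders can be absorbed by elementary estimates.
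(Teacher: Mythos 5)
Your proposal is correct in substance, and it splits into two cases worth distinguishing. For part (i) you take a genuinely different and cleaner route than the paper: having shown that every non-real point of $w^{-1}(\mathbb{R})$ lies on the curve $z=-y\cot(y)+\i y$ with $|z|=|y|/|\sin(y)|>1$, you conclude $\overline{D_1(0)}\cap w^{-1}(\mathbb{R})=[-1,1]$, note $w([-1,1])=[-1,e^2]$ is disjoint from $(-\infty,-1/c]$, and then invoke the positive distance between the compact disk and the closed preimage $w^{-1}((-\infty,-1/c])$. This soft compactness argument avoids all quantitative work; the paper instead proves a quantitative ``Fact'' (any point of the curve inside $D_{1+\epsilon}(0)$ lies in $D_{\epsilon^{1/4}}(-1)$) and uses continuity of $w$ at $-1$ to handle part (i), so your version buys simplicity at the cost of losing the explicit $R$. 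For part (ii) your route is essentially the paper's: the same parametrization, the same deduction $y^2<6\epsilon+3\epsilon^2$ from $|z|<1+\epsilon$, and the same conclusion $|z+1|<\epsilon^{1/4}$. The step you flag as outstanding — non-asymptotic versions of your expansions — is not a conceptual gap: it is exactly what the paper supplies via the two elementary inequalities $0<1-y\cot(y)<y^2/2$ for $|y|<1/4$ and $y\csc(y)>1+y^2/6$ for $|y|<\pi$ (inequalities \eqref{inequality1}--\eqref{inequality2}, quoted from Chen--Qi). The second gives your lower bound $|z|^2-1>y^2/3$ on all of $(-\pi,\pi)$, hence $|y|<\sqrt{6\epsilon+3\epsilon^2}<1/4$; the first then gives the cruder but sufficient $|z+1|^2<y^4/4+y^2<9\epsilon^2+6\epsilon<\sqrt{\epsilon}$ for $\epsilon\le 1/100$, so you do not need the sharp coefficient $y^4/9$. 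Two minor points: your reduction to $0<|y|<\pi$ is a nice addition the paper gets implicitly from the domain of \eqref{inequality2}; and in the real case the global statement $w(x)>-1$ for real $x\neq-1$ should be justified by $w'(x)=(1+x)e^{x+1}$ changing sign only at $x=-1$, not by the local Taylor expansion alone.
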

\begin{proof}
First we will establish the following 

\vspace{0.2cm}
\noindent 
{\bf Fact:} If $\epsilon  \in (0,1/100]$ and for some $y\in {\r}$ we have $z=-y \cot(y)+\i y \in D(0;1+\epsilon)$, then necessarily $z\in D_{\epsilon^{1/4}}(-1)$. 
\vspace{0.2cm}

To prove this fact we will need the following two inequalities
\begin{align}
\label{inequality1}
&0<1-y\cot(y)< y^2/2 \;\;\;\; {\textrm{ for all }} \; y\in (-1/4,1/4),\\
\label{inequality2}
&1+y^2/6 <y \csc(y)\qquad \;\;\;\; {\textrm{ for all }} \; y\in (-\pi,\pi).
\end{align}
These inequalities can be easily established by examining MacLaurin series of $y\cot(y)$ and $y\sec(y)$. Alternatively, these inequalities follow at once from inequalities (17) and (19) in \cite{Chen_Qi_2004}.

Now, if $\epsilon \in (0,1/100]$ and $z=-y\cot(y)+\i y\in D_{1+\epsilon}(0)$ , then 
$$
|z|^2=y^2 \cot(y)^2 + y^2<(1+\epsilon)^2,
$$
thus $|y|<1+\epsilon$ and from \eqref{inequality2} we find 
$$
(1+y^2/6)^2< y^2 \csc(y)^2=y^2 \cot(y)^2 + y^2<(1+\epsilon)^2,
$$
which implies that $|y|<\sqrt{6 \epsilon}<1/4$. Then applying \eqref{inequality1} we estimate
$$
|z+1|^2=(1-y\cot(y))^2+y^2<y^4/4+y^2<9 \epsilon^2+6 \epsilon=\epsilon^{1/2} \times \epsilon^{1/2} (9\epsilon+6)<\epsilon^{1/2}, 
$$
and this implies $z\in D_{\epsilon^{1/4}}(-1)$. This ends the proof of the Fact above. 

\vspace{0.25cm}

Let us now prove part (i) of Lemma \ref{lemma3}. Since $w(-1)=-1$ and $w$ is an entire (and thus, continuous) function, there exists $\epsilon \in (0,1/100]$ small enough such that $|z+1|<\epsilon^{1/4}$ implies 
$|w(z)+1|<1/c-1$. Take $R=1+\epsilon$ and let $z\in D_R(0)$. If $z=x+\i y$ for $x,y\in \r$, then 
$$
\im(w(z))=e^{x+1}(y\cos(y)+x\sin(y)).
$$
Thus $w(z)\in \r$ if $y=0$ or $x=-y\cot(y)$. For $y=0$ it is easy to see that $w(z)=w(x)\ge -1$, thus 
$-1<c\times w(z)$. If $y\neq 0$ and $w(z) \in \r$ then $z=-y\cot(y)+\i y$. 
Since $z\in D_{1+\epsilon}(0)$, by the Fact above we conclude that $z\in D_{\epsilon^{1/4}}(-1)$, thus 
$|w(z)+1|<1/c-1$, which implies that $-1<c\times w(z)$. Thus if $z \in D_{1+\epsilon}(0)$ and $w(z)$ is real, then necessarily $-1<c\times w(z)$. In other words, the function $c\times w(z)$ maps $D_{1+\epsilon}(0)$ into ${\mathbb C}\setminus(-\infty,1]$. 

It remains to prove part (ii) of Lemma \ref{lemma3}. Let $\epsilon \in (0,1/100]$ and $z=x+\i y \in {\mathcal D}_{\epsilon}$ for $x,y,\in \r$. As we argued above, if $w(z) \in \r$ then either $y=0$ or $x=-y\cot(y)$. In the former case $z$ is real and the minimum of $w(z)$ over real
$z \in {\mathcal D}_{\epsilon}$ is strictly greater than $-1$ (the minimum of $w(x)$ over $x\in \r$ is $-1$ and is achieved at $z=-1$, and $z=-1 \notin {\mathcal D}_{\epsilon}$). In the latter case, we use the Fact above and conclude that $z \in D_{\epsilon^{1/4}}(-1)$, which is impossible since by definition ${\mathcal D}_{\epsilon}$ does not contain points from $D_{\epsilon^{1/4}}(-1)$. Therefore, the function $w(z)$ maps ${\mathcal D}_{\epsilon}$
into ${\mathbb C}\setminus (-\infty,1]$. 
\end{proof}

\begin{lemma}\label{lemma4}
Let $\Omega$ be a compact set in $\r^n$ and $a({\bf x})$ be a continuous function and $b({\bf x})$ an integrable function of
${\bf x}=(x_1,x_2,\dots,x_n) \in \Omega$. 
Assume that $g(z)$ is analytic in the domain $G \subset {\mathbb C}$ and 
$za({\bf x}) \in G$ for all $z\in G$ and ${\bf x} \in \Omega$. 
Then the function 
\begin{equation}
\Phi(z):=\int\limits_{\Omega} g(z a({\bf x})) b({\bf x})\d x_1 \d x_2 \dots \d x_n 
\end{equation}
is also analytic in $G$. 
\end{lemma}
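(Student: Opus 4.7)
The plan is to prove this by Morera's theorem, combined with Fubini's theorem to interchange the contour integral with the integral over $\Omega$. Concretely, I would verify that $\Phi$ is continuous on $G$ and that $\oint_{\partial T}\Phi(z)\,\d z=0$ for every closed triangle $T\subset G$.

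First I would establish continuity. Fix $z_0\in G$ and choose $r>0$ so that the closed disk $\overline{D_r(z_0)}\subset G$. The map $(z,\mathbf{x})\mapsto za(\mathbf{x})$ is continuous on the compact set $\overline{D_r(z_0)}\times\Omega$, so its image $K$ is compact, and by hypothesis $K\subset G$. Since $g$ is analytic on $G$, it is bounded on $K$ by some $M>0$, giving the majorant $|g(za(\mathbf{x}))b(\mathbf{x})|\le M|b(\mathbf{x})|$, which is integrable. Dominated convergence then yields continuity of $\Phi$ at $z_0$.

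Next I would apply Morera. Let $T\subset G$ be a closed triangle. The same compactness argument, with $T$ in place of the disk, shows that $g(za(\mathbf{x}))b(\mathbf{x})$ is absolutely integrable on $\partial T\times\Omega$ with respect to the product of arclength on $\partial T$ and Lebesgue measure on $\Omega$. Fubini's theorem then allows the interchange
\begin{equation*}
\oint_{\partial T}\Phi(z)\,\d z=\int_{\Omega}b(\mathbf{x})\left(\oint_{\partial T}g(za(\mathbf{x}))\,\d z\right)\d x_1\cdots \d x_n.
\end{equation*}
For each fixed $\mathbf{x}\in\Omega$, the function $z\mapsto g(za(\mathbf{x}))$ is the composition of the entire map $z\mapsto za(\mathbf{x})$ (whose image lies in $G$ by hypothesis) with the analytic $g$, hence is analytic on $G$. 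Cauchy's theorem gives that the inner contour integral vanishes, so the double integral is zero. Morera's theorem then implies that $\Phi$ is analytic in $G$.

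The only nontrivial point in this scheme is verifying the hypotheses of Fubini, and this is precisely where the stability assumption $za(\mathbf{x})\in G$ for all $z\in G$, $\mathbf{x}\in\Omega$ is used: it ensures that the continuous image of the compact set $T\times\Omega$ sits inside $G$, so that $g$ can be uniformly bounded there. Every other step is a direct appeal to a standard theorem.
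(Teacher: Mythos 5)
Your proposal is correct and follows essentially the same route as the paper: analyticity of $z\mapsto g(za(\mathbf{x}))$ for fixed $\mathbf{x}$, Cauchy's theorem on triangles, a compactness bound to justify Fubini, and Morera's theorem to conclude. The only difference is that you also verify continuity of $\Phi$ explicitly (a hypothesis of Morera's theorem that the paper's proof leaves implicit), which is a welcome extra degree of care.
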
 
\begin{proof}
For each ${\bf x}\in \Omega$, the function $z \mapsto g(z a({\bf x})) $ is analytic in $G$. By Cauchy's integral theorem, for each ${\bf x}\in \Omega$ and for any triangle $T$ contained in $G$ we have
$$
\int_T g(z a({\bf x}))  \d z=0.
$$
Since $T$ and $\Omega$ are compact and $g(z a({\bf x}))$ is continuous on $T \times \Omega$, we have $\sup_{(z,x)\in T \times \Omega} |g(z a({\bf x}))|<\infty$. Using this fact and the assumption that $b$ is integrable on $\Omega$, we can apply Fubini's theorem
and conclude that 
$$ 
\int_T \Phi(z) \d z =\int_\Omega \left[\int_T g(z a({\bf x}))  \d z\right] g({\bf x}) \d x_1 \d x_2 \ldots \d x_n =0
$$
for any every triangle $T$ contained $G$.  Morera's Theorem tells us that $\Phi$ is analytic in $G$.
\end{proof}

Next, we define 
\begin{equation}\label{def_Gz}
G(z):=\frac{2}{\pi} \int\limits_0^{\frac{\pi}{2}} H(z\sin(t)^2) \d t,   
\end{equation}
where $H$ was defined in \eqref{H_in_terms_of_W}. As we discussed on page \pageref{H_in_terms_of_W}, $H$ is analytic  in 
${\mathbb C}\setminus (-\infty,-e^{-1}]$ and satisfies $H(0)=0$. This fact and Lemma \ref{lemma4} applied to the integral in 
\eqref{def_Gz} implies that $G$ is also analytic in ${\mathbb C}\setminus (-\infty,-e^{-1}]$ and satisfies $G(0)=0$. 

\begin{lemma}\label{lemma1}
The function $\Delta(z)$ defined by \eqref{def_Delta} has integral representation 
  \begin{equation}\label{Delta_integral_formula}
 \Delta(z)=\int_0^{\infty} G(4e^{-1-u}(1-e^{-u})w(z)) f(x_0u/\ln(2)) \d u
 \end{equation}
and it is analytic in ${\mathcal D}_{\epsilon}$ for $\epsilon<1/100$. 
\end{lemma}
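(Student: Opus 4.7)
The plan is to derive the integral representation \eqref{Delta_integral_formula} by substituting the Gaver-Stehfest formula \eqref{def_Gaver_Stehfest_apprx} and the Laplace integral $F(s)=\int_0^\infty e^{-sx}f(x)\d x$ into the definition \eqref{def_Delta} of $\Delta$, performing the change of variable $u=x\ln(2)/x_0$ so that all $k$-dependent factors combine into $e^{-ku}$, and interchanging the $u$-integral with the sums over $n$ and $k$. This reduces everything to the pointwise identity
\begin{equation*}
\sum_{n\ge 1}(-1)^n z^n \sum_{k=1}^{2n} a_k(n) e^{-ku} = G\bigl(4e^{-1-u}(1-e^{-u})w(z)\bigr),
\end{equation*}
which I will verify first as an analytic identity on a small disk around $z=0$ and then promote to ${\mathcal D}_\epsilon$ by analytic continuation.

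For the combinatorial identity, I would swap the $j$- and $k$-summations inside $a_k(n)$ (noting that $[(k+1)/2]\le j\le \min(k,n)$ is equivalent to $1\le j\le n$ and $j\le k\le 2j$) and apply the binomial theorem to the inner sum over $\ell=k-j$ to obtain
\begin{equation*}
\sum_{k=1}^{2n} a_k(n) e^{-ku} = \frac{(-1)^n}{n!}\sum_{j=1}^n (-1)^j j^{n+1}\binom{n}{j}\binom{2j}{j}\bigl(e^{-u}(1-e^{-u})\bigr)^j.
\end{equation*}
Multiplying by $(-1)^n z^n$, swapping the sums over $n$ and $j$, and collapsing the inner $n$-sum via $\sum_{n\ge j} j^{n+1} z^n/(n-j)! = j^{j+1} z^j e^{jz}$ yields, with $t=e^{-u}(1-e^{-u})$,
\begin{equation*}
\sum_{n\ge 1}(-1)^n z^n\sum_{k=1}^{2n}a_k(n)e^{-ku} = \sum_{j\ge 1}\binom{2j}{j}\frac{j^{j+1}}{j!}(-tze^z)^j.
\end{equation*}
On the other hand, applying $(z\d/\d z)^2$ termwise to \eqref{W_series} gives $H(y)=\sum_{n\ge 1}(-1)^n n^{n+1}y^n/n!$, and substituting into \eqref{def_Gz} together with the Wallis integral $\tfrac{2}{\pi}\int_0^{\pi/2}\sin(t)^{2n}\d t = 4^{-n}\binom{2n}{n}$ gives $G(y)=\sum_{j\ge 1}\binom{2j}{j}\frac{j^{j+1}}{j!}(-y/4)^j$. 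Specializing $y=4tze^z = 4e^{-1-u}(1-e^{-u})w(z)$ matches the two sides.

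For analyticity, Lemma \ref{lemma3}(ii) ensures $w(z)\in\C\setminus(-\infty,-1]$ for every $z\in{\mathcal D}_\epsilon$ with $\epsilon<1/100$, and the elementary bound $4e^{-1-u}(1-e^{-u})\in(0,e^{-1}]$ valid for every $u>0$ then places the argument of $G$ inside the natural domain $\C\setminus(-\infty,-e^{-1}]$ on which $G$ is analytic. An application of Lemma \ref{lemma4}, with $a(u)=4e^{-1-u}(1-e^{-u})$ and $b(u)=f(x_0 u/\ln(2))$ (after truncating $u$ to a compact interval and controlling the tail separately), then yields analyticity of the right-hand side of \eqref{Delta_integral_formula} in ${\mathcal D}_\epsilon$. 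The main obstacle is the rigorous justification of the interchange of $\sum_n$ with $\int_0^\infty\d u$ when first deriving \eqref{Delta_integral_formula}; I would handle this on a small disk around $z=0$ by dominated convergence, exploiting $G(0)=0$ and the explicit closed form of the partial sums to obtain a bound of the form $|G(4e^{-1-u}(1-e^{-u})w(z))|\le C|w(z)|(1-e^{-u})e^{-u}$ uniform in $z$, and using local integrability of $f$ near $u=0$ together with absolute convergence of the Laplace transform at some small positive abscissa to control the tail as $u\to\infty$.
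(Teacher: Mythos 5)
Your proposal is correct, but it takes a more self-contained route than the paper. The paper simply imports two facts from \cite{Kuznetsov_2013}: the integral representation $f_n(x)=\int_0^{\infty}q_n(4e^{-u}(1-e^{-u}))f(xu/\ln 2)\,\d u$ with $q_n$ as in \eqref{def_qnv}, and the generating-function identity $G(vze^z)=\sum_{n\ge 1}q_n(v)(-1)^nz^n$ of Proposition~2.2 there; it then justifies the interchange of $\sum_n$ and $\int_0^\infty\d u$ on $|z|<1/(4e)$ via the explicit coefficient bound $|q_n(v)|\le v\,n^{n+1}2^n/n!$, and finishes exactly as you do, with Lemma~\ref{lemma3}(ii) and Lemma~\ref{lemma4}. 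You instead re-derive both imported facts from scratch: your resummation of $\sum_k a_k(n)e^{-ku}$ (swapping $j$ and $k$ and applying the binomial theorem) recovers precisely $q_n(4e^{-u}(1-e^{-u}))$, and your computation of $G$ as $\sum_j\binom{2j}{j}\frac{j^{j+1}}{j!}(-y/4)^j$ via the series for $W$ and the Wallis integral reproves the generating-function identity; I checked these manipulations and they are sound (the $n$--$j$ swap converges absolutely for $|z|$ small, as needed). What your route buys is independence from the cited reference; what the paper's route buys is brevity and a cleaner justification of the sum--integral interchange. On that last point, one small caution: dominating $|G(4e^{-1-u}(1-e^{-u})w(z))|$ itself is not what the interchange requires --- you need a $u$-integrable bound on the partial sums (equivalently on $\sum_n|z|^n|q_n(v)|$), uniform in $z$ on a small disk; your phrase ``explicit closed form of the partial sums'' should be replaced by the termwise coefficient bound, exactly as the paper does. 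Your handling of the non-compactness of $(0,\infty)$ (truncation plus tail control via $G(0)=0$ and finiteness of the Laplace transform) is actually more explicit than the paper's.
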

\begin{proof}
From \cite{Kuznetsov_2013} we know that Gaver-Stehfest approximants are given by an integral representation
\begin{equation}\label{f_n_q_n_formula_1}
f_n(x)= \int_0^{\infty}
q_n\left(4e^{-u} (1-e^{-u})\right) f(x u/\ln(2)) \d u.
\end{equation}
where
\begin{equation}\label{def_qnv}
q_n(v):=\sum\limits_{k=1}^n \frac{ k^{n+1}(\tfrac{1}{2})_k}{(n-k)!(k!)^2} (-1)^{n+k} v^k, \;\;\; n\ge 1. 
\end{equation}
Also, from Proposition 2.2 in \cite{Kuznetsov_2013} we find that for $0\le v \le 1$ and $|z|<1/(2e)$ 
\begin{equation}\label{eqn_generating_function_qn}
G\left(vz e^z \right)=\sum\limits_{n\ge 1} q_n(v) (-1)^n z^n.
\end{equation}
Also, from \eqref{def_qnv} we find (using the Binomial Theorem and the trivial estimates $(\tfrac{1}{2})_k<k!$ and $k^{n+1}\le n^{n+1}$) that
\begin{equation}\label{bound_qn}
|q_n(v)| \le v n^{n+1} \sum\limits_{k=1}^n \frac{1}{(n-k)!(k!)} <v\frac{n^{n+1}2^n}{n!},
 \;\;\; {\textnormal{ for all }} \;  0\le v \le 1.
\end{equation}
Thus for every $|z|<1/(4e)$ we have the bound
$$
\sum\limits_{n\ge 1} |q_n(v)|\times |z|^n < C \times v, \;\;\; 0\le v \le 1,
$$
for some $C>0$, so that we can apply the Dominated Convergence Theorem to conclude that 
\begin{align}\label{series_Delta}
\Delta(z)=\sum\limits_{n\ge 1} f_n(x_0) (-1)^n z^n &=
\int\limits_0^{\infty}
\Big[\sum\limits_{n\ge 1} q_n\left(4e^{-u} (1-e^{-u})\right) (-1)^n z^n \Big]f(x_0 u/\ln(2)) \d u
\\
\nonumber
 &=
\int_0^{\infty} G(4ze^{z-u}(1-e^{-u})) f(x_0 u/\ln(2)) \d u.
\end{align}
Thus we have established \eqref{Delta_integral_formula} for $|z|<1/(4e)$. 
The fact that $\Delta(z)$ can be extended to an analytic function in ${\mathcal D}_{\epsilon}$ follows from 
\eqref{Delta_integral_formula}, Lemma \ref{lemma3}(ii), Lemma \ref{lemma4} and the fact that $G(z)$ is an analytic function in 
${\mathbb C}\setminus (-\infty,-e^{-1}]$. 
\end{proof}

Next, we define
\begin{equation}\label{def_Delta1}
 \Delta_1(z)=\Delta_1(z;\sigma):=\int_0^{\sigma} G(e^{-1}  (1-v^2)w(z)) 
 \tilde f(v) \d v,
 \end{equation}
 where $\sigma \in (0,1)$ and $\tilde f$ is defined in \eqref{def_tilde_f}.

\begin{lemma}\label{lemma5}
For any $\sigma \in (0,1)$ there exists $R>1$ such that the function $z\mapsto \Delta(z)-\Delta_1(z;\sigma)$ is analytic in $D_R(0)$. 
\end{lemma}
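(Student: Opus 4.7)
The plan is to rewrite the integral representation \eqref{Delta_integral_formula} for $\Delta(z)$ in the same form as the definition \eqref{def_Delta1} of $\Delta_1(z;\sigma)$, so that the difference $\Delta(z)-\Delta_1(z;\sigma)$ collapses to a tail integral over $v\in(\sigma,1)$. On that tail the factor $1-v^2$ stays bounded above by $1-\sigma^2<1$, which is precisely the slack needed to invoke Lemma \ref{lemma3}(i) together with Lemma \ref{lemma4}.

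First I would change variables in \eqref{Delta_integral_formula}: split the $u$-integration at $u=\ln 2$ and substitute $v=2e^{-u}-1$ on $(0,\ln 2)$ (giving $\d u=-\d v/(1+v)$ and $x_0 u/\ln(2)=x_0\log_{1/2}((1+v)/2)$) and $v=1-2e^{-u}$ on $(\ln 2,\infty)$ (giving $\d u=\d v/(1-v)$ and $x_0 u/\ln(2)=x_0\log_{1/2}((1-v)/2)$). In both cases one has $4e^{-1-u}(1-e^{-u})=e^{-1}(1-v^2)$. Summing the two pieces and recognizing the definition \eqref{def_tilde_f} of $\tilde f$ gives $\Delta(z)=\int_0^1 G(e^{-1}(1-v^2)w(z))\,\tilde f(v)\,\d v$, and subtracting \eqref{def_Delta1} leaves
\begin{equation*}
\Delta(z)-\Delta_1(z;\sigma)=\int_\sigma^1 G(e^{-1}(1-v^2)w(z))\,\tilde f(v)\,\d v.
\end{equation*}

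Next I would set $c:=1-\sigma^2\in(0,1)$ and apply Lemma \ref{lemma3}(i) to obtain $R>1$ with $c\,w(z)\in\C\setminus(-\infty,-1]$ for every $z\in D_R(0)$. A short case check shows that if $\zeta\in\C\setminus(-\infty,-1]$ and $t\in[0,1]$, then $t\zeta\in\C\setminus(-\infty,-1]$ as well (the only nontrivial case, $\zeta\in(-1,0]$, scales into itself). Applying this with $\zeta=c\,w(z)$ and $t=(1-v^2)/c\in[0,1]$ places $e^{-1}(1-v^2)w(z)$ in the analyticity domain $\C\setminus(-\infty,-e^{-1}]$ of $G$ for all $v\in[\sigma,1]$ and $z\in D_R(0)$, and Lemma \ref{lemma4} with $\Omega=[\sigma,1]$, $a(v)=e^{-1}(1-v^2)$, $b(v)=\tilde f(v)$ and $g=G$ then delivers analyticity of $\Delta(z)-\Delta_1(z;\sigma)$ on $D_R(0)$. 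The only subtle point is integrability near $v=1$, where $\tilde f$ has a $1/(1-v)$ singularity; however the product $G(\cdot)\tilde f(v)$ is integrable because $G(s)=O(s)$ as $s\to 0$ cancels it, and reverting the substitution identifies this product integral with the tail $\int_{\ln(2/(1-\sigma))}^\infty G(\cdot)f(x_0 u/\ln(2))\,\d u$, whose absolute convergence is already contained in Lemma \ref{lemma1}. One can then justify Lemma \ref{lemma4} either by reading its integrability hypothesis as applying to the full integrand $g(za)b$, or by exhausting $[\sigma,1-\eta]\to[\sigma,1)$ and passing to the limit uniformly on compact subsets of $D_R(0)$.
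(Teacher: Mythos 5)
Your argument is correct and follows essentially the same route as the paper: the identical change of variables reduces $\Delta-\Delta_1$ to the tail integral over $v\in(\sigma,1)$, and Lemma \ref{lemma3}(i) with $c=1-\sigma^2$ combined with Lemma \ref{lemma4} yields analyticity on $D_R(0)$. The one subtlety you flag --- that $\tilde f$ alone is not integrable near $v=1$ and only the product with $G$ is --- is resolved in the paper by setting $g(s):=G(s/e)/s$ (analytic in $\C\setminus(-\infty,-1]$ since $G(0)=0$) and $b(v):=(1-v^2)\tilde f(v)$ (bounded on $(\sigma,1]$), so that Lemma \ref{lemma4} applies verbatim; this is just a cleaner packaging of your exhaustion/limit argument.
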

\begin{proof}
First we compute
\begin{align}\label{Delta_computation1}
\nonumber
 \Delta(z)&=\int_0^{\ln(2)} G(4e^{-1-u}(1-e^{-u})w(z)) f(x_0u/\ln(2)) \d u\\
 & +\int_{\ln(2)}^{\infty} G(4e^{-1-u}(1-e^{-u})w(z)) f(x_0u/\ln(2)) \d u\\ 
 \nonumber
 &= \int_0^1 G(e^{-1} (1-v^2)w(z)) \bigg[ \frac{f(x_0 \log_{1/2}((1+v)/2))}{1+v}+\frac{f(x_0 \log_{1/2}((1-v)/2))}{1-v} \bigg]\d v.
\end{align}
Here we changed variables $u=-\ln((1+v)/2)$ in the integral over $u\in (0,\ln(2))$ and $u=-\ln((1-v)/2)$ in the integral 
over $u\in (\ln(2),\infty)$.  

Next, we define $g(z):=G(z/e)/z$. 
As we pointed out on page \pageref{def_Gz}, the function $G$ is analytic in ${\mathbb C}\setminus(-\infty,-e^{-1}]$
and satisfies $G(0)=0$, thus the function $g$ is analytic in ${\mathbb C} \setminus (-\infty,-1]$.  
From \eqref{def_Delta1} and \eqref{Delta_computation1} we obtain
$$
 \Delta(z)-\Delta_1(z)=w(z) \int_{\sigma}^1 g(w(z)(1-v^2)) b(v) \d v
$$
where 
$$
b(v):=(1-v^2) \tilde f(v)=(1-v)f(x_0 \log_{1/2}((1+v)/2))+(1+v)f(x_0 \log_{1/2}((1-v)/2)).
$$
 According to Lemma \ref{lemma3}(i), there exists $R>1$ such that the 
function $z\mapsto (1-\sigma^2)\times w(z)$ maps $D_R(0)$ into ${\mathbb C}\setminus (-\infty,-1]$. Then for every $v \in (\sigma,1]$ we have 
$$
 (1-v^2) \times w(z) \in {\mathbb C} \setminus (-\infty,-1], \;\;\; 
 {\textrm{ for all }} \; z\in D_R(0). 
$$
Note also that the function $b(v)$ is integrable over
$v\in (\sigma,1]$. Applying Lemma \ref{lemma4}, we conclude that $\Delta(z)-\Delta_1(z)$ is analytic in $D_R(0)$.   
\end{proof}

Next we define 
\begin{equation}\label{def_Delta2}
\Delta_2(z)=\Delta_2(z;\sigma):=\frac{1}{\pi}\int_0^{\sigma} \int_{v^2}^{\sigma^2} \frac{H(e^{-1}(1-x)w(z))}{\sqrt{(1-x)(x-v^2)}}\d x \tilde f(v) \d v
\end{equation}
 where $\sigma \in (0,1)$, $H$ was defined in \eqref{H_in_terms_of_W} and $\tilde f$ was defined in \eqref{def_tilde_f}.

\begin{lemma}\label{lemma6}
For any $\sigma \in (0,1)$ there exists $R>1$ such that the function $z\mapsto \Delta_1(z;\sigma)-\Delta_2(z;\sigma)$ is analytic in $D_R(0)$. 
\end{lemma}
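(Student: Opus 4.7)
The plan is to convert $\Delta_1$ into the same double-integral form as $\Delta_2$ by substituting the integral representation (\ref{def_Gz}) of $G$ and making an appropriate change of variables in the inner integral, so that the difference $\Delta_1-\Delta_2$ collapses to an integral over $x \in [\sigma^2,1]$ where the integrand has no singularity of $H$. Specifically, plugging (\ref{def_Gz}) into (\ref{def_Delta1}) gives
$$
\Delta_1(z;\sigma)=\frac{2}{\pi}\int_0^{\sigma}\int_0^{\pi/2} H\bigl(e^{-1}(1-v^2)\sin^2(t)\,w(z)\bigr)\,\d t\,\tilde f(v)\,\d v.
$$
In the inner integral I would change variables via $x=1-(1-v^2)\sin^2(t)$. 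Then $\sin^2(t)=(1-x)/(1-v^2)$, $\cos^2(t)=(x-v^2)/(1-v^2)$, and $\d t=-\d x/(2\sqrt{(1-x)(x-v^2)})$; as $t$ traverses $[0,\pi/2]$, $x$ traverses $[v^2,1]$. This yields
$$
\Delta_1(z;\sigma)=\frac{1}{\pi}\int_0^{\sigma}\int_{v^2}^{1}\frac{H(e^{-1}(1-x)w(z))}{\sqrt{(1-x)(x-v^2)}}\,\d x\,\tilde f(v)\,\d v,
$$
and subtracting (\ref{def_Delta2}) gives
$$
\Delta_1(z;\sigma)-\Delta_2(z;\sigma)=\frac{1}{\pi}\int_0^{\sigma}\int_{\sigma^2}^{1}\frac{H(e^{-1}(1-x)w(z))}{\sqrt{(1-x)(x-v^2)}}\,\d x\,\tilde f(v)\,\d v.
$$

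Next, I would show that the right-hand side defines an analytic function of $z$ in some disk $D_R(0)$ with $R>1$. By Lemma \ref{lemma3}(i) applied with $c=1-\sigma^2$, there exists $R>1$ such that $(1-\sigma^2)\,w(z)\notin(-\infty,-1]$ for all $z\in D_R(0)$; equivalently $w(z)\notin(-\infty,-1/(1-\sigma^2)]$. Since for every $x\in[\sigma^2,1]$ one has $(-\infty,-1/(1-x)]\subset(-\infty,-1/(1-\sigma^2)]$ (and the case $x=1$ is trivial), it follows that $(1-x)w(z)\notin(-\infty,-1]$, hence $e^{-1}(1-x)w(z)\notin(-\infty,-e^{-1}]$, for all $z\in D_R(0)$ and $x\in[\sigma^2,1]$. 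Therefore $z\mapsto H(e^{-1}(1-x)w(z))$ is analytic on $D_R(0)$ for every fixed $x\in[\sigma^2,1]$.

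Finally, I would apply Lemma \ref{lemma4} (or equivalently, Morera's theorem combined with Fubini's theorem exactly as in its proof) on the compact domain $\Omega=[0,\sigma]\times[\sigma^2,1]$. The key integrability check is that $\int_{\sigma^2}^{1}\d x/\sqrt{(1-x)(x-v^2)}\le \pi$ uniformly in $v\in[0,\sigma]$, which follows from the substitution $x=v^2+(1-v^2)s$ reducing the inner integral to $\int_a^1 \d s/\sqrt{s(1-s)}=\pi-2\arcsin(\sqrt{a})\le \pi$; combined with the integrability of $\tilde f$ on $[0,\sigma]$ and the uniform boundedness of $H(e^{-1}(1-x)w(z))$ on compact subsets of $D_R(0)\times\Omega$, this yields the required dominating function. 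The main obstacle I anticipate is the careful bookkeeping around the change of variables and verification that the transformed identity for $\Delta_1-\Delta_2$ initially holds on a small disk about $z=0$ (where both sides are directly defined from their series/integral representations), from which the analytic extension to all of $D_R(0)$ follows by the analyticity argument just described.
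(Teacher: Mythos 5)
Your proposal is correct and follows essentially the same route as the paper: both substitute the integral representation \eqref{def_Gz} of $G$ into \eqref{def_Delta1}, perform the change of variables $x=1-(1-v^2)\sin^2(t)$, identify $\Delta_2$ as the portion with $x\in[v^2,\sigma^2]$, and apply Lemma \ref{lemma3}(i) together with Lemma \ref{lemma4} to the remaining piece where $(1-x)\le 1-\sigma^2$. The only cosmetic difference is that the paper splits the $(t,v)$-domain into $\Omega_1\cup\Omega_2$ before changing variables, whereas you change variables globally and then subtract.
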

\begin{proof}
We define two sets
\begin{align*}
\Omega_1&:=\Big\{(t,v)\in {\mathbb R}^2 \; : \; 0\le v \le \sigma, \; 0 \le t < \arcsin\Big(\sqrt{\frac{1-\sigma^2}{1-v^2}}\Big)\Big\},\\
\Omega_2&:=\Big\{(t,v)\in {\mathbb R}^2 \; : \; 0\le v \le \sigma, \;  \arcsin\Big(\sqrt{\frac{1-\sigma^2}{1-v^2}}\Big)\le t \le \frac{\pi}{2}\Big\}.
\end{align*}
Using formulas \eqref{def_Gz} and \eqref{def_Delta1} we write
\begin{align*}
\Delta_1(z)&=\frac{2}{\pi}\int\limits_0^{\sigma} \int\limits_0^{\frac{\pi}{2}}H(e^{-1}(1-v^2) \sin(t)^2 w(z))\d t \tilde f(v) \d v\\
\nonumber
&=\frac{2}{\pi}\iint\limits_{\Omega_1}H(e^{-1}(1-v^2) \sin(t)^2 w(z))\d t \tilde f(v) \d v+
\frac{2}{\pi}\iint\limits_{\Omega_2} H(e^{-1}(1-v^2) \sin(t)^2 w(z))\d t \tilde f(v) \d v.
\end{align*}
We change the variable of integration $t\mapsto x=1-(1-v^2)\sin(t)^2$ (so that $\d t=-1/(2\sqrt{(1-x)(x-v^2)}) \d x$) and obtain
$$
\frac{2}{\pi} \iint\limits_{\Omega_2} H(e^{-1}(1-v^2) \sin(t)^2w(z))\d t \tilde f(v) \d v= \Delta_2(z), 
$$
which implies
\begin{equation}\label{eqn_Delta1-Delta2}
\Delta_1(z)-\Delta_2(z)=\frac{2}{\pi}\iint\limits_{\Omega_1}H(e^{-1}(1-v^2) \sin(t)^2w(z))\d t \tilde f(v) \d v.
\end{equation}
Note that on the set $\Omega_1$ we have $(1-v^2)\sin(t)^2\le 1-\sigma^2$, thus we can use the fact that $H(z/e)$ is an analytic function in 
${\mathbb C}\setminus (-\infty,-1]$ and apply Lemma \ref{lemma3}(i) and Lemma \ref{lemma4} to conclude that the integral in the right-hand side of \eqref{eqn_Delta1-Delta2} is an analytic function of $z$ in the disk $D_R(0)$ for some $R>1$. 
\end{proof}

\vspace{0.25cm}
\noindent
{\bf Proof of Theorem \ref{thm3}:}
Part (i) of Theorem \ref{thm3} was established in Lemma \ref{lemma1}. To prove part (ii), it is enough to show that 
the function
$z\mapsto \Delta_2(z;\sqrt{\sigma})-\Lambda(w(z);\sigma)$ is analytic in $D_{\delta}(-1)$ for some $\delta>0$, since
\begin{equation*}
\Delta(z)-\Lambda(w(z);\sigma)=[\Delta(z)-\Delta_1(z;\sqrt{\sigma})]+[\Delta_1(z;\sqrt{\sigma})-\Delta_2(z;\sqrt{\sigma})]
+\Delta_2(z;\sqrt{\sigma})-\Lambda(w(z);\sigma),
\end{equation*}
and both terms in square brackets are analytic in $D_R(0)$ for some $R>1$ (by Lemmas \ref{lemma5} and \ref{lemma6}), thus they are analytic in $D_{\delta}(-1)$ for any $\delta \in (0, R-1]$.

We apply Fubini's Theorem to the double integral \eqref{def_Delta2} and interchange the order of integration to obtain
\begin{align*}
\Delta_2(z;\sqrt{\sigma})&=\frac{1}{\pi} \int_0^{\sigma} \int_0^{\sqrt{x}}
\frac{H(e^{-1}(1-x)w(z))}{\sqrt{(1-x)(x-v^2)}}\tilde f(v) \d v \d x
\\
&=\int_0^{\sigma} H(e^{-1}(1-x)w(z)) \bigg[\frac{1}{\pi \sqrt{1-x}} \int_0^{\sqrt{x}} \frac{\tilde f(v)}{\sqrt{x-v^2}}\d v \bigg] \d x
\\&=
\int_0^{\sigma} H(e^{-1}(1-x)w(z)) \bigg[\frac{1}{\pi\sqrt{1-x}}  \int_0^{\frac{\pi}{2}}\tilde f(\sqrt{x} \sin(y)) \d y\bigg] \d x
\\&= \int_0^{\sigma} H(e^{-1}(1-x)w(z)) \phi(x) \d x.
\end{align*}
In deriving this formula we have changed variable of integration $v=\sqrt{x} \sin(y)$ and used \eqref{def_phix}. 
Next, we apply \eqref{eqn_H_decomposition} to the above identity and obtain
\begin{align*}
\Delta_2(z;\sqrt{\sigma})&=\int_0^{\sigma} H(e^{-1}(1-x)w(z))  \phi(x) \d x\\
&=
\int_0^{\sigma} (1+w(z)(1-x))^{-3/2}A(w(z)(1-x))  \phi(x) \d x+
\int_0^{\sigma} B(w(z)(1-x))  \phi(x) \d x,
\end{align*}
which is equivalent to 
$$
\Delta_2(z;\sqrt{\sigma})-\Lambda(w(z);\sigma)=
\int_0^{\sigma} B(w(z)(1-x))  \phi(x) \d x.
$$
According to the discussion on page \pageref{def_Bu}, the function $B$ is analytic in $D_{1}(-1)$. The function $w(z)$ is entire and satisfies $w(-1)=-1$, thus there exists $\delta>0$ small enough such that $w(z)(1-x) \in D_{1}(-1)$ for all $x\in (0,\sigma)$ and 
$z\in D_{\delta}(-1)$. 
Applying Lemma \ref{lemma4} we conclude that the function
$$
z\mapsto \int_0^{\sigma} B(w(z)(1-x)) \phi(x) \d x=\Delta_2(z;\sqrt{\sigma})-\Lambda(w(z);\sigma)
$$
is analytic in $D_{\delta}(-1)$. 
\qed


\section{Proof of Theorem \ref{thm_main}}\label{section_proof_thm1}


We are working under assumption that $f$ is analytic in a neighbourhood of $x_0>0$. We can also assume, without loss of generality, that $f(x_0)=0$, since Gaver-Stehfest approximations are linear in $f$ and they are exact for constant functions.

Our goal is to show that for some $\sigma \in (0,1)$ and $\delta>0$ the function $\Lambda(w(z);\sigma)$ is analytic in $D_{\delta}(-1)$. Once this is established, Theorem \ref{thm3} would imply that the function $\Delta(z)$ is analytic in $D_R(0)$ for some $R>1$ and then Cauchy estimates for derivatives of analytic function would give us the desired result: for every $r\in (1;R)$ we have $|f_n(x_0)|=O(r^{-n})$ as $n\to +\infty$.

We recall that $\phi(x)$ is defined by 
\begin{equation*}
\phi(x)=\frac{1}{\pi \sqrt{1-x}} \int_0^{\frac{\pi}{2}}\tilde f(\sqrt{x} \sin(y)) \d y.
\end{equation*} 
Since $f$ is analytic in the neighbourhood of $x_0$ and satisfies $f(x_0)=0$, the function $\tilde f$ (defined by \eqref{def_tilde_f}) is even and analytic in a neighbourhood of $x=0$ and also satisfies $\tilde f(0)=0$, which implies that the function 
$x\mapsto \tilde f(\sqrt{x} \sin(y))$ is analytic in a neighbourhood of $x=0$. Applying Lemma \ref{lemma4} we conclude that the function $\varphi(x):=\phi(x)/x$ is analytic in a neighbourhood of $x=0$. 

\vspace{0.25cm}
Our problem is now reduced to the following one: {\it  given that $A(u)$ is analytic in $D_{1}(-1)$ and $\varphi(x)$ is analytic in a neighbourhood of $x=0$, prove that there exist $\sigma \in (0,1)$ and $\delta>0$ such that the function
\begin{equation}\label{def_Delta3_2}
\Lambda(w(z))=\int_0^{\sigma} (1+w(z)(1-x))^{-\frac{3}{2}} A(w(z)(1-x)) x \varphi(x) \d x, 
\end{equation}
is analytic in $D_{\delta}(-1)$.  }
\vspace{0.25cm}

Since $\varphi(x)$ is analytic in a neighbourhood of $x=0$, there exists $\epsilon \in (0,1)$ small enough such that the function of two variables
$$
(w,u) \mapsto \varphi((1+w-u^2)/w) 
$$
is analytic in $(w,u) \in D_{\epsilon}(-1)\times D_{\epsilon}(0)$. We set $\sigma=\epsilon^2/4$. Recall
that $w(z)=ze^{z+1}$ and it an entire function that satisfies $w(-1)=-1$. Therefore we can find $\delta \in (0,1)$ small enough such that  
the following two conditions hold
\begin{itemize}
\item[(i)] $w(z) \in D_{\sigma}(-1)$ for $z\in D_{\delta}(-1)$;
\item[(ii)] $1+w(z)=0$ for $z\in D_{\delta}(-1)$ only if $z=-1$. 
\end{itemize}

Note that $w(-1)=-1$, $w'(-1)=0$ and $w''(-1)=1$, thus $1+w(z)=(z+1)^2 \tilde w(z)$ for some function $\tilde w(z)$ with $\tilde w(-1)=1/2$. According to condition (ii) above, $\tilde w(z)\neq 0$ for $z\in D_{\delta}(-1)$. Thus we conclude that the function
$\eta_1(z):=\sqrt{1+w(z)}=(z+1)\sqrt{\tilde w(z)}$ is analytic in $D_{\delta}(-1)$.  It is also  clear that 
$\eta_1(z)\in D_{\epsilon}(0)$ for $z\in D_{\delta}(-1)$. 

From condition (i) above we find that 
\begin{equation}\label{eta_2_bound1}
1+w(z)(1-\sigma) \in D_{(1-\sigma) \sigma}(\sigma) \;\;\; 
{\textrm{ for }} \; z\in D_{\delta}(-1),
\end{equation}
The fact that $0 \notin D_{(1-\sigma) \sigma}(\sigma)$ implies $1+w(z)(1-\sigma)\neq 0$ for $z\in D_{\delta}(-1)$, so that the function
$\eta_2(z):=\sqrt{1+w(z)(1-\sigma)}$ is analytic and nonzero in $D_{\delta}(-1)$. From \eqref{eta_2_bound1} we also conclude that 
\begin{equation}\label{eta_2_bound2}
|\eta_2(z)|\le \sqrt{\sigma+(1-\sigma) \sigma}< \sqrt{2\sigma}=\frac{\epsilon}{\sqrt{2}}<\epsilon
\;\;\; 
{\textrm{ for }} \; z\in D_{\delta}(-1),
\end{equation}
thus $\eta_2(z) \in D_{\epsilon}(0)$ for $z\in D_{\delta}(-1)$.

Assume now that $z\in (-1,-1+\delta)$, so that $1+w(z)\in (0,\sigma)$ and 
$1+w(z)(1-\sigma) \in (\sigma,\sigma+(1-\sigma)\sigma)$. We change the variable of integration $x\mapsto u=\sqrt{1+w(1-x)}$ 
in \eqref{def_Delta3_2}, so that $x=(1+w-u^2)/w$ and obtain
\begin{align}\label{eqn_Delta3_a}
\nonumber
\Lambda(w(z);\sigma)&=\int_0^{\sigma} (1+w(z)(1-x))^{-\frac{3}{2}} A(w(z)(1-x)) x \varphi(x) \d x
\\  &=
\frac{2}{w(z)^2} \int_{\eta_2(z)}^{\eta_1(z)}\Big(\frac{1+w(z)}{u^2} -1\Big) K(w(z),u) \d u
\end{align}
where we defined
$$
K(w,u):=A(u^2-1) \varphi((1+w-u^2)/w).
$$ 
Since $A$ is analytic in $D_{1}(-1)$ and $\varphi((1+w-u^2)/w)$ is analytic in $(w,u) \in D_{\epsilon}(-1)\times D_{\epsilon}(0)$, we conclude that 
the function $K(w,u)$ is analytic in
$(w,u) \in D_{\epsilon}(-1)\times D_{\epsilon}(0)$. 

Next, we define 
$$
L(w,u):=\frac{1+w}{u^2} \big[ K(w,u)-K(w,0)\big]-K(w,u). 
$$
Since the function $u\mapsto K(w,u)$ is even and analytic in
$(w,u) \in D_{\epsilon}(-1)\times D_{\epsilon}(0)$ we conclude that the function $u\mapsto (K(w,u)-K(w,0))/u^2$ is analytic in $u \in D_{\epsilon}(-1) $ for each $w \in D_{\epsilon}(0)$, thus the function $L(w,u)$ is analytic in $(w,u) \in D_{\epsilon}(-1)\times D_{\epsilon}(0)$. 
Therefore, there exists a function $M(w,u)$ that is analytic in $(w,u) \in D_{\epsilon}(-1)\times D_{\epsilon}(0)$ and satisfies 
$$
\frac{\d }{\d u} M(w,u)=L(w,u), \;\;\; {\textnormal{for}} \;\; (w,u) \in D_{\epsilon}(-1)\times D_{\epsilon}(0).
$$
With these definitions of $L$ and $M$ we can rewrite the integrand in the right-hand side of \eqref{eqn_Delta3_a} as follows
$$
\Big(\frac{1+w}{u^2} -1\Big) K(w,u)=\frac{1+w}{u^2}K(w,0)+L(w,u)=\frac{1+w}{u^2}K(w,0)+\frac{\d }{\d u} M(w,u).
$$
Now we can evaluate the integral in \eqref{eqn_Delta3_a}: 
\begin{align*}
& \int_{\eta_2(z)}^{\eta_1(z)} \Big[\frac{1+w}{u^2} K(w,0)+\frac{\d }{\d u} M(w,u) \Big] \d u \\
&=  \Big[ -\frac{1+w}{u} K(w,0)+M(w,u)\Big] \Big|^{u=\eta_1(z)}_{u=\eta_2(z)} \\
&=  \Big(-\frac{1+w}{\eta_1(z)}+\frac{1+w}{\eta_2(z)}\Big)  K(w,0) + M(w,\eta_1(z))-M(w,\eta_2(z)),
\end{align*}
so that we finally obtain (using the fact that $1+w(z)=\eta_1^2(z)$)
\begin{equation}\label{eqn_Delta3_final}
\Lambda(w(z);\sigma)=\frac{2}{w(z)^2} \times \Big[ \Big(-\eta_1(z)+\frac{1+w(z)}{\eta_2(z)}\Big) K(w(z),0) + M(w(z),\eta_1(z))-M(w(z),\eta_2(z)) \Big].
\end{equation}
So far we have established \eqref{eqn_Delta3_final} for $z\in (-1,-1+\delta)$. However, due to our choice of 
$\sigma$ and $\delta$, the right-hand side in \eqref{eqn_Delta3_final} is an analytic function 
of $z\in D_{\delta}(-1)$, which proves that the function $\Lambda(w(z);\sigma)$ can be extended to an analytic function in  $z\in D_{\delta}(-1)$. 
This ends the proof of Theorem \ref{thm_main}. 
\qed


\section{Proof of Theorem \ref{thm2}}\label{section_proof_thm2}


We are working under assumption that $f$ is $m$ times differentiable at $x_0>0$ and $f(x_0)=0$. We can also assume, without loss of generality, that 
$f^{(j)}(x_0)=0$ for $j=1,\dots,m$. Indeed, the Taylor expansion of $f$ at $x_0$ gives us
$$
f(x)=\sum_{k=1}^m \frac{f^{(k)}(x_0)}{k!} (x-x_0)^k + h_m(x)(x-x_0)^m= P(x) +R(x),
$$
where $h_m(x)\to 0$ as $x\to x_0$. Since Gaver-Stehfest approximations are linear, we have
$$
f_n(x)=P_{n}(x)+R_{n}(x),
$$
where  $P_n(x)$, and $R_n(x)$ are the $n$-th Gaver-Stehfest approximations of $P(x)$ and $R(x)$, respectively. The function $P$ is a polynomial, in particular it is analytic and thus Theorem 
\ref{thm_main} implies that $P_n(x_0)$ converge to $0=P(x_0)$ exponentially fast as $n\to +\infty$. Therefore, $f_n(x_0)=o(n^{-k})$ as $n\to +\infty$ if and only if 
$R_n(x_0)=o(n^{-k})$. 

Next, we argue that Theorem \ref{thm2} will be established if we can show that for some $\sigma>0$ and $\delta>0$ the function $\frac{\d^k}{\d z^k} \Lambda(w(z);\sigma)$ is bounded in $D_{\delta}(-1) \cap D_1(0)$. Assuming this result, Theorem \ref{thm3} implies that the function
$\Delta^{(k)}(z)$ is continuous on ${\overline {D_1(0)}}\setminus \{-1\}$ and bounded in ${\overline {D_1(0)}}$. From \eqref{def_Delta} we find 
$$ 
\Delta^{(k)}(z)= \sum_{n\geq k} n(n-1)\cdots(n-k+1)f_n(x_0)(-1)^n z^{n-k}, \;\;\; |z|<1.
$$
Thus, for any $0<r<1$ and $n\geq k$, we have
\begin{align*}
n(n-1)\cdots(n-k+1)f_n(x_0)(-1)^{n}&=\frac{1}{2\pi i} \int_{|z|=r} z^{-(n-k)-1}\Delta^{(k)}(z) \d z\\
&= r^{-(n-k)}\int_0^1 e^{-2\pi i(n-k)t}\Delta^{(k)}(re^{2\pi it}) \d t.
\end{align*}
Taking the limit as $r \uparrow 1$ (and using the Dominated Convergence Theorem) we conclude that 
\begin{align*}
n(n-1)\cdots(n-k+1)f_n(x_0)(-1)^{n-k}=\int_0^1 e^{-2\pi i(n-k)t}\Delta^{(k)}(e^{2\pi it}) \d t.
\end{align*}
Since $\Delta^{(k)}(e^{2\pi it})$ is continuous and bounded on $(0,1/2)\cup (1/2,1)$, it follows from the Riemann-Lebesgue lemma that 
$$
n(n-1)\cdots(n-k+1)f_n(x_0)\to 0, \;\;\; n\to +\infty, 
$$
which is equivalent to $f_n(x_0)=o(n^{-k})$. 

Next, we recall that $\phi(x)$ is defined via \eqref{def_phix}. Since 
$f^{(j)}(x_0)=0$ for $j=0,1,\dots,m$, we also have $\tilde f^{(j)}(x_0)=0$ for $j=0,1,\dots,m$ (see \eqref{def_tilde_f}), thus $\tilde f(x)=O(x^{m})$ as $x\to 0$ and therefore  
$\phi(x)=O(x^{m/2})$ as $x\downarrow 0$.

\vspace{0.25cm}
Our problem is now reduced to the following one: {\it given that $m=2k+3$, $A(u)$ is analytic in $D_{1}(-1)$ and
$\phi(x)$ is an integrable function on $(0,1-\epsilon)$ (for any $\epsilon>0$) 
that satisfies $\phi(x)=O(x^{m/2})$ as $x\downarrow 0$, prove that there exist $\sigma \in (0,1)$ and $\delta>0$ such that the function
\begin{equation}\label{def_Delta3_3}
\frac{\d^k}{\d z^k}\Lambda(w(z);\sigma)=\frac{\d^k}{\d z^k}\int_0^{\sigma} (1+w(z)(1-x))^{-\frac{3}{2}} A(w(z)(1-x)) \phi(x) \d x, 
\end{equation}
is bounded in $D_{\delta}(-1) \cap D_1(0)$. }
\vspace{0.25cm}

For  $\delta \in (0,1)$ we define 
$$
\Omega_{\delta}:=\{ w\in {\mathbb C} \; : \; w=ze^{z+1}, \; z\in D_{\delta}(-1) \cap D_1(0)\}.
$$
On Figure \ref{fig2} we plot the domains $D_{\delta}(-1) \cap D_1(0)$ and $\Omega_{\delta}$ for $\delta=1/2$.

\begin{figure}
\centering
\subfloat[]{\label{fig2_p1}\includegraphics[height =6cm]{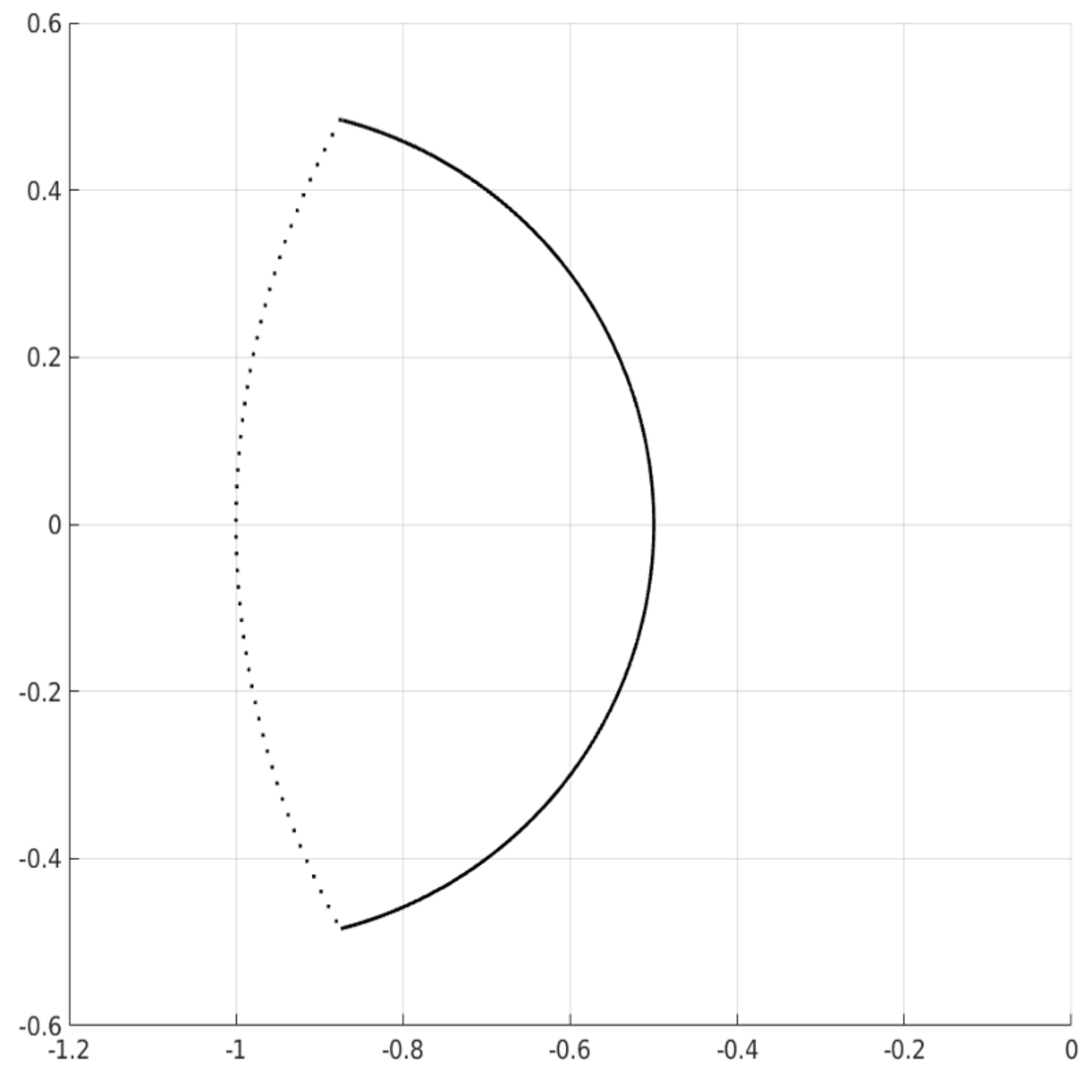}} 
\qquad
\subfloat[]{\label{fig2_p2}\includegraphics[height =6cm]{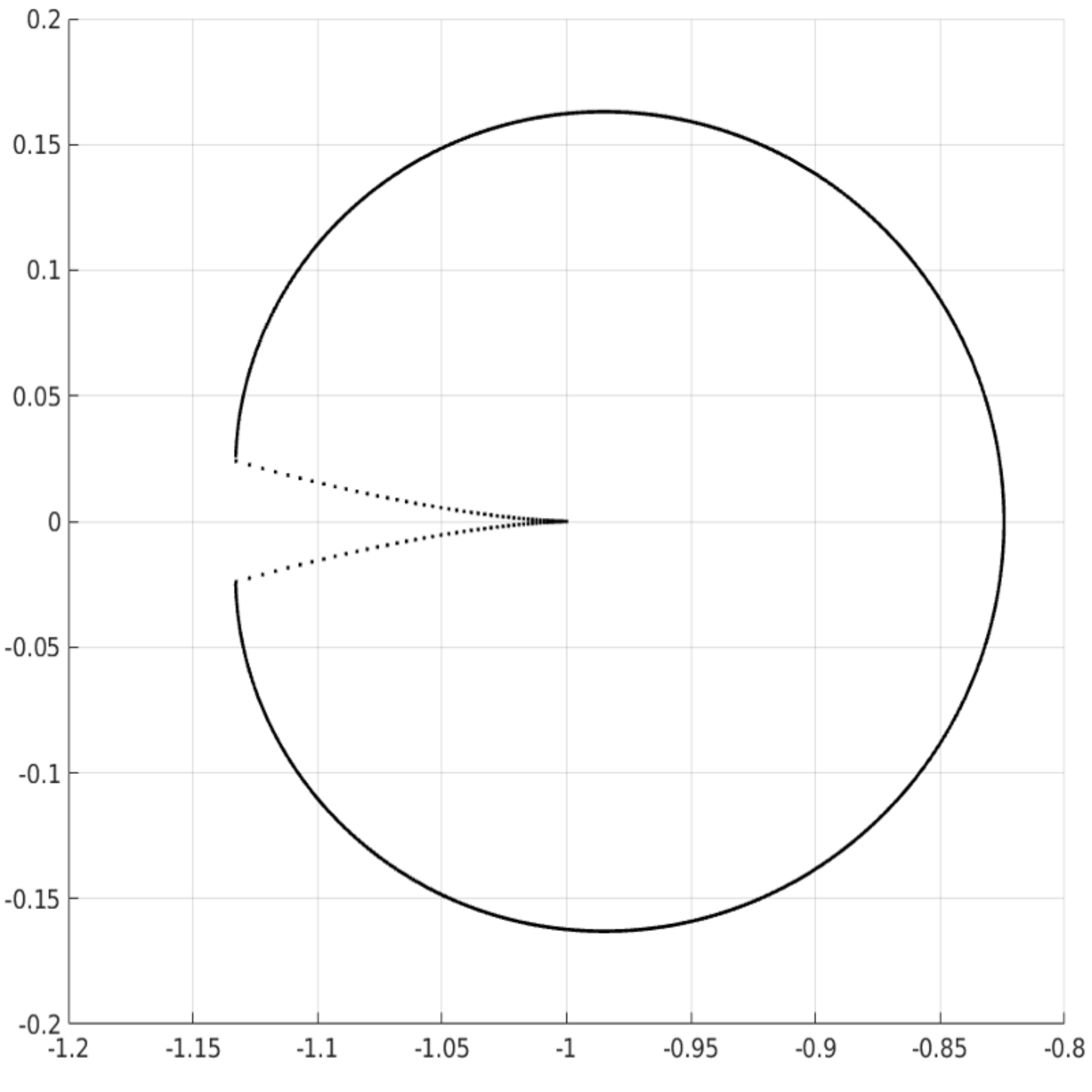}}
\caption{(a) Domain $D_{\frac{1}{2}}(-1) \cap D_1(0)$ and (b) its image $\Omega_{\frac{1}{2}}$ under the map $z\mapsto w=z e^{z+1}$} 
\label{fig2}
\end{figure}

Before we can proceed with the proof of Theorem \ref{thm2}, we need to establish three auxiliary results.  

\begin{lemma}\label{lemma11}
${}$
\begin{itemize}
\item[(i)] As $z\to -1$ we have $w(z)=-1+\frac{1}{2}(z+1)^2+\frac{1}{3}(z+1)^3+O((z+1)^4)$.
\item[(ii)] Let $w\in \Omega_{\delta}$ and $1+w=a+\i b$ for real $a$ and $b$. If $a<0$ then $b^2>C|a|^{3}$ for some positive constant $C=C(\delta)$.
\item[(iii)] Let $w\in \Omega_{\delta}$ and $(1+w)/(-w)=a+\i b$ for real $a$ and $b$. If $a<0$ then $b^2>C|a|^{3}$ for some positive constant $C=C(\delta)$.
\end{itemize}
\end{lemma}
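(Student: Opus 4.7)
Part (i) is a direct Taylor expansion: setting $u:=z+1$, one has $w(z)=(u-1)e^u$, and expanding $e^u$ yields $(u-1)e^u=-1+\tfrac{1}{2}u^2+\tfrac{1}{3}u^3+O(u^4)$, which is the claimed formula.

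For part (ii), the plan is to use (i) to write $1+w=\tfrac{1}{2}u^2 h(u)$, where $h$ is entire with $h(0)=1$ and $h(u)=1+\tfrac{2}{3}u+O(u^2)$. Writing $u=\rho e^{\i\theta}$, the condition $z\in D_1(0)$ reads $\cos\theta>\rho/2$. Define $\Theta:=2\theta+\arg h(u)$, so that $a+\i b=\tfrac{1}{2}\rho^2|h(u)|e^{\i\Theta}$; the hypothesis $a<0$ is equivalent to $\cos\Theta<0$. The crux is the inequality $|\pi-\Theta|\ge \tfrac{\rho}{3}+O(\rho^2)$. From $\arccos(\rho/2)=\pi/2-\rho/2+O(\rho^3)$ and the constraint, one deduces $|2\theta|\le \pi-\rho+O(\rho^3)$, while the expansion of $h$ gives $|\arg h(u)|\le \tfrac{2}{3}\rho|\sin\theta|+O(\rho^2)\le \tfrac{2}{3}\rho+O(\rho^2)$. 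Since $\tfrac{2}{3}<1$, combining these two bounds yields the desired estimate on $|\pi-\Theta|$, hence $|\sin\Theta|\ge \tfrac{\rho}{3}+O(\rho^2)$. Substituting into $b^2/|a|^3=2\sin^2\Theta/(\rho^2|h(u)||\cos\Theta|^3)$ produces a positive lower bound for $\rho$ sufficiently small. For $\rho$ bounded away from $0$, I would argue by compactness of $\{w\in\bar\Omega_\delta:a\le 0\}$, combined with the observation that $b=\im(1+w)=0$ forces $w\in\r$; for $\delta<1$, any real $w\in\bar\Omega_\delta$ lies in $[-1,(-1+\delta)e^\delta]\subset[-1,0)$, so $a=1+w\ge 0$ with equality only at $w=-1$, which shows $b\ne 0$ whenever $a\le 0$ away from $w=-1$.

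For part (iii), write $\eta:=(1+w)/(-w)=\xi/(1-\xi)$ with $\xi=1+w$. Since $|w|=|1-\xi|\ge c_0>0$ on $\Omega_\delta$ (using $\delta<1$), one has the explicit identities $\re(\eta)=(\re(\xi)-|\xi|^2)/|1-\xi|^2$ and $\im(\eta)=\im(\xi)/|1-\xi|^2$. Given $\re(\eta)<0$, split into sub-cases. If $\re(\xi)<0$, apply (ii) to $\xi$; in the sub-case $|\re(\xi)|\ge |\xi|^2$ one has $|\re(\eta)|\lesssim|\re(\xi)|$, yielding $\im(\eta)^2\gtrsim|\re(\eta)|^3$ directly, and in the sub-case $|\re(\xi)|<|\xi|^2$ one has $|\im(\xi)|\gtrsim|\xi|$ (from $\re(\xi)^2<|\xi|^4$) together with $|\re(\eta)|\lesssim|\xi|^2$, so that $\im(\eta)^2/|\re(\eta)|^3\gtrsim 1/|\xi|^4$, which is bounded below on $\Omega_\delta$. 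If $\re(\xi)\ge 0$, the condition $\re(\eta)<0$ forces $\re(\xi)<|\xi|^2$, giving $\im(\xi)^2=|\xi|^2-\re(\xi)^2\ge|\xi|^2(1-|\xi|^2)\gtrsim|\xi|^2$, from which the required bound again follows. For $|\xi|$ bounded away from $0$, compactness together with the observation that $\eta\in\r$ iff $w\in\r$ (and then $\eta>0$ on $\Omega_\delta$ for $\delta<1$) produces a positive lower bound by continuity.

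The main obstacle is the asymptotic analysis in part (ii): the estimate $|\pi-\Theta|\gtrsim\rho$ depends on the strict inequality $\tfrac{2}{3}<1$ between the leading coefficients in the bounds on $|\arg h(u)|$ and $\pi-|2\theta|$, and this strict inequality is a quantitative consequence of the specific cubic coefficient $\tfrac{1}{3}$ identified in (i). Part (iii) is then a perturbation/compactness argument layered on (ii), with the sub-case analysis being somewhat tedious but mechanical.
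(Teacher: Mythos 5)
Your proposal is correct, and for part (ii) it takes a genuinely different route from the paper. The paper parametrizes the boundary arc $u\mapsto w(-e^{\i u})$ near $u=0$, computes the cusp asymptotics $1+w=-\tfrac12 u^2-\tfrac{\i}{6}u^3+O(u^4)$, i.e.\ $b^2=\tfrac29|a|^3+o(|a|^3)$ along the boundary of $\Omega_\delta$, and then argues that for $C$ small the curve $y^2=C|x|^3$, $x<0$, avoids $\Omega_\delta$ entirely; this is a boundary-curve argument and leaves implicit both the topological step (that $\Omega_\delta$ lies on the correct side of the cusp curve) and the treatment of points away from $w=-1$. You instead prove the bound pointwise: factoring $1+w=\tfrac12 u^2h(u)$ with $h(u)=1+\tfrac23 u+O(u^2)$ and translating $|z|<1$ into $\cos\theta>\rho/2$, you bound $\arg(1+w)$ away from $\pm\pi$ by $\tfrac{\rho}{3}+O(\rho^2)$, which gives $|b|\gtrsim|1+w|^{3/2}\gtrsim|a|^{3/2}$ directly. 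The two computations are consistent — your extremal case $|\Theta|=\pi-\rho/3$ reproduces the paper's constant $2/9$ — but your version buys a cleaner logical structure (no "inside/outside the curve" step) and makes explicit where the constraint $|z|<1$ is actually used; the price is the slightly fussier bookkeeping of $\arg h$. Your compactness argument for $w$ bounded away from $-1$ (via the fact that real $w\in\bar\Omega_\delta$ lies in $[-1,0)$, so $a\le 0$ and $b=0$ force $w=-1$) is a step the paper omits but genuinely needs. For part (iii) the paper offers no proof at all ("follows from (i) and (ii)"), and your case analysis via $\eta=\xi/(1-\xi)$, $\re\eta=(\re\xi-|\xi|^2)/|1-\xi|^2$, $\im\eta=\im\xi/|1-\xi|^2$ is a correct and complete way to fill that gap; just note that the sub-case using $\im(\xi)^2\ge|\xi|^2(1-|\xi|^2)\gtrsim|\xi|^2$ requires $|1+w|$ bounded away from $1$ on $\Omega_\delta$, which holds once $\delta$ is small — consistent with how the lemma is invoked in the proof of Theorem 2, but worth stating.
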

\begin{proof}
Part (i) follows by Taylor expansion of $w(z)=ze^{z+1}$. 
To prove part (ii), we parametrize the circle  $|z|=1$ as $z(u)=-\cos(u)-\i \sin(u)$, so that $u=0$ corresponds to $z=-1$. 
Writing Taylor series near $u=0$ we see that 
$$
z(u)=-1+\frac{u^2}{2}-\i u + O(u^3), 
$$ 
and using the result in item (i) we compute
$$
1+w(z(u))=\frac{1}{2}\Big( \frac{u^2}{2}-\i u \Big)^2+\frac{1}{3}\Big( \frac{u^2}{2}-\i u \Big)^3+O(u^4)=-\frac{1}{2} u^2 -\i \frac{1}{6} u^3+O(u^4).
$$
Thus we see that the boundary of the domain ${\Omega}_{\delta}$ near $w=-1$ (that is represented by the dotted line on figure \ref{fig2_p2}) is paramaterized by the curve 
$\gamma(u)=-1-\frac{1}{2} u^2 -\i \frac{1}{6} u^3+O(u^4)$ near $u=0$. Equivalently, if $\gamma(u)=-1+a+\i b$, then we have parametrization $b^2=\frac{2}{9}|a|^3+o(|a|^3)$ near the point $w=-1$. Thus if we take $C>0$ small enough, then the entire curve 
$\{ z=-1+x+\i y \; : \; x<0, \; y \in {\r}, \; y^2=C |x|^3\}$ will lie outside of the domain ${\Omega_{\delta}}$. This ends the proof of item (ii).

Item (iii) follows from (i) and (ii). 
\end{proof}

Next, we define
$$
Q_{\alpha,\beta}(w):=\int_0^{\sigma} |1+w(1-x)|^{-\alpha} x^{\beta} \d x,
$$
where $\alpha>0$, $\beta>0$, $\sigma \in (0,1)$ and $w\in {\mathbb C}\setminus (-\infty,-1]$.

\begin{lemma}\label{lemma12}
Assume that $\beta>0$ and $0\le \gamma < \alpha$. The function $w\mapsto |1+w|^{\gamma}Q_{\alpha,\beta}(w)$ is bounded in
${\Omega}_{\delta}$ if $\beta+1\ge \max(\alpha, 3\alpha/2-\gamma)$.
\end{lemma}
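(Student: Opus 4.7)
The plan is to pin down the geometry of $|1+w(1-x)|$ as $w\to -1$ and carefully account for the contribution near its minimum. Outside any fixed neighbourhood of $w=-1$ the integrand $|1+w(1-x)|^{-\alpha}$ is continuous and uniformly bounded in $(w,x)$, so $Q_{\alpha,\beta}$ is continuous on that compact piece of $\Omega_{\delta}$ and the conclusion is automatic there. It therefore suffices to show that $|1+w|^{\gamma}Q_{\alpha,\beta}(w)$ stays bounded as $w\to -1$ within $\Omega_{\delta}$.

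Set $a:=\re(1+w)$, $b:=\im w$, $x^*:=\re((1+w)/w)$ and $d:=|b|/|w|$. A direct completion of squares yields the identity
\[
|1+w(1-x)|^{2}=|w|^{2}(x-x^*)^{2}+d^{2},
\]
exhibiting $x\mapsto|1+w(1-x)|$ as a hyperbola-like profile with minimum value $d$ attained at $x=x^*$. I would split $[0,\sigma]$ at $|x-x^*|=d/|w|$. On the ``peak'' piece the integrand is at most $d^{-\alpha}$ over a set of length at most $2d/|w|$, contributing $\lesssim d^{1-\alpha}(\max(x^*,0)+d/|w|)^{\beta}$. On the complement one uses $|1+w(1-x)|\ge |w||x-x^*|$ and obtains, by an elementary Beta-type computation (splitting at $x^*$ when it lies inside $[0,\sigma]$), a contribution of the same order plus a uniformly bounded remainder; the Beta-type integrals remain finite uniformly as $w\to -1$ precisely thanks to $\beta+1\ge\alpha$. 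Altogether, $Q_{\alpha,\beta}(w)\lesssim 1+d^{1-\alpha}(\max(x^*,0)+d/|w|)^{\beta}$.

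The main obstacle is controlling this right-hand side as $w\to -1$. Lemma \ref{lemma11}(i) gives $|w|\sim 1$ and $x^*=-a+O(|1+w|^{2})$ near $w=-1$. When $a\ge 0$ one has $\max(x^*,0)=O(|1+w|^{2})$ and $d\le|1+w|$, so the peak is $\lesssim |1+w|^{\beta+1-\alpha}$, and combined with $\beta+1\ge\alpha$ this makes $|1+w|^{\gamma}Q_{\alpha,\beta}(w)$ bounded. The delicate case is $a<0$: here $x^*\sim|a|\sim|1+w|$ and Lemma \ref{lemma11}(ii) gives $b^{2}\ge C|a|^{3}$, hence $d\gtrsim |1+w|^{3/2}$. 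Substituting,
\[
d^{1-\alpha}(x^*)^{\beta}\;\lesssim\;|1+w|^{\beta+\frac{3}{2}(1-\alpha)},
\]
and therefore
\[
|1+w|^{\gamma}Q_{\alpha,\beta}(w)\;\lesssim\;|1+w|^{\gamma+\beta+\frac{3}{2}-\frac{3\alpha}{2}}+|1+w|^{\gamma}.
\]
Both powers of $|1+w|$ are non-negative under the hypothesis $\beta+1\ge \frac{3\alpha}{2}-\gamma$ (which rearranges to $\gamma+\beta+\tfrac{3}{2}-\tfrac{3\alpha}{2}\ge\tfrac{1}{2}$), completing the argument.
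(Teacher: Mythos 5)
Your strategy is essentially the paper's: write $|1+w(1-x)|=|w|\,\bigl|x-(1+w)/w\bigr|$, control the contribution away from the minimum of this profile using $\beta+1\ge\alpha$, and control the contribution near the minimum using the cusp estimate $b^2\gtrsim |a|^3$ from Lemma \ref{lemma11}, which is exactly where $\beta+1\ge 3\alpha/2-\gamma$ enters. The paper's case split (according to whether $\re\bigl((1+w)/(-w)\bigr)>-x/2$ or not) is just a different packaging of your split at $|x-x^*|=d/|w|$, and your identity $|1+w(1-x)|^2=|w|^2(x-x^*)^2+d^2$ and the treatment of the case $a<0$ (including the deduction $d\gtrsim|1+w|^{3/2}$ and $x^*\lesssim|1+w|$) are correct.

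There is, however, one step that fails as written: in the case $a\ge 0$ you bound the peak by $d^{1-\alpha}\bigl(\max(x^*,0)+d/|w|\bigr)^\beta$ and conclude it is $\lesssim|1+w|^{\beta+1-\alpha}$ ``since $d\le|1+w|$''. In the relevant range $\alpha>1$ (the application uses $\alpha=3/2+j\ge 5/2$) the exponent $1-\alpha$ is negative, so $d\le|1+w|$ gives $d^{1-\alpha}\ge|1+w|^{1-\alpha}$ --- the inequality points the wrong way, and $d^{1-\alpha}$ genuinely blows up as $b\to 0$ with $a>0$ (for real $w\in(-1,0)$ it is infinite). The conclusion is still true but needs a different justification: from $x^*=(-a+a^2+b^2)/|w|^2$ one sees that $a\ge 0$ and $x^*>0$ force $a\lesssim b^2$ and hence $\max(x^*,0)\lesssim d^2\lesssim d$, so the peak is $\lesssim d^{1-\alpha}\cdot d^{\beta}=d^{\beta+1-\alpha}=O(1)$; and when $x^*\le 0$ the peak interval meets $[0,\sigma]$ only inside $[0,d/|w|]$ (or not at all if $d=0$), giving the same bound. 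The paper sidesteps this entirely by using $|(1+w)/(-w)+x|^2\ge x^2/4+b^2$ whenever $\re\bigl((1+w)/(-w)\bigr)>-x/2$, which requires no lower bound on $|b|$. A minor shared point: both your ``Beta-type'' remainder and the paper's own estimate acquire a logarithm at the borderline $\beta+1=\alpha$, so the $O(1)$ claims there really need strict inequality; this is harmless since the application of Lemma \ref{lemma12} only ever uses $\beta+1>\alpha$.
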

\begin{proof}
First we need to bound from below the value of $|1+w(1-x)|$, for $x\in (0,\sigma)$ and $w\in {\Omega_{\delta}}$. For $s$ and $t$ ranging over some subsets of $(0,\infty)$ we will write 
$s \approx t$ if for some positive constants $C_1$ and $C_2$ we have $C_1 t < s < C_2 t$ for all $s$ and $t$. Thus, for  $w\in \Omega_{\delta}$ we have $|w| \approx 1$ and
$$
|1+w(1-x)|=|w| \times |(1+w)/(-w)+x| \approx |(1+w)/(-w)+x|. 
$$ 
Let $(1+w)/(-w)=a+\i b$ for real $a$ and $b$. It is clear that $a=O(1)$ and $b=O(1)$ when $w\in {\Omega_{\delta}}$. If $a>-x/2$ then $x+a>x/2$ and we have an inequality
$$
|(1+w)/(-w)+x|^2=|(x+a)+\i b|^2=(x+a)^2+b^2>x^2/4+b^2. 
$$
If $a\le -x/2$ (so that $a<0$ and $x\le 2|a|$) we have 
$$
|(1+w)/(-w)+x|^2=|(x+a)+\i b|^2=(x+a)^2+b^2\ge b^2. 
$$
Thus, there exists a constant $C>0$ such that   
\begin{align*}
Q_{\alpha,\beta}(w)&=\int_0^{\sigma} |1+w(1-x)|^{-\alpha} x^{\beta} \d x <C \int_0^{\sigma} |(1+w)/(-w)+x|^{-\alpha} x^{\beta} \d x\\
&<C \times \bigg [ {\mathbf{1}}_{\{a<0\}}\int_0^{2|a|} |b|^{-\alpha} x^{\beta} \d x+ \int_0^{\sigma} (x^2/4+b^2)^{-\alpha/2} x^{\beta} \d x \bigg].
\end{align*}
We have 
$$
\int_0^{2|a|} |b|^{-\alpha} x^{\beta} \d x=O(|a|^{\beta+1} |b|^{-\alpha}).
$$
Performing change of variables $x=2|b|y$ we compute
\begin{align*}
I:=\int_0^{\sigma} (x^2/4+b^2)^{-\alpha/2} x^{\beta} \d x=2^{\beta+1} |b|^{\beta+1-\alpha} 
\int_0^{\sigma/|b|} (1+y^2)^{-\alpha/2} y^{\beta} \d y.
\end{align*}
If $\sigma/|b|\le 1$ the integral in the right-hand side of the above equation is $O(1)$, and since $|b|=O(1)$ and $\beta+1\ge \alpha$ we conclude that in this case $I=O(1)$. If $\sigma/|b|>1$, we write  
$$
\int_0^{\sigma/|b|} (1+y^2)^{-\alpha/2} y^{\beta} \d y=\int_0^{1} (1+y^2)^{-\alpha/2} y^{\beta} \d y+\int_1^{\sigma/|b|} (1+y^2)^{-\alpha/2} y^{\beta} \d y.
$$
The first integral is a constant (depending only on $\alpha$ and $\beta$). In the second integral, the integrand can be bounded from above and below by a constant multiple of $y^{\beta-\alpha}$. Thus, when $\sigma/|b|>1$, the second integral can be estimated as 
$$
\int_1^{\sigma/|b|} (1+y^2)^{-\alpha/2} y^{\beta} \d y \approx \int_1^{\sigma/|b|} y^{\beta-\alpha} \d y= O(1)+ O(|b|^{\alpha-\beta-1}).
$$
Combining these results we obtain an estimate (in the case $\sigma/|b|>1$)
$$
I=\int_0^{\sigma} (x^2/4+b^2)^{-\alpha/2} x^{\beta} \d x= |b|^{\beta+1-\alpha}  \times ( O(1)+ O(|b|^{\alpha-\beta-1}))=O(|b|^{\beta+1-\alpha})+O(1)=O(1), 
$$
where in the last step we again used the fact that $|b|=O(1)$ and $\beta+1\ge \alpha$.

It is clear that $|1+w|^{\gamma}=O(1)$ in ${\Omega_{\delta}}$. Thus, combining the above estimates, we conclude
\begin{equation}\label{eqn31}
|1+w|^{\gamma} Q_{\alpha,\beta}(w)=O(1)+{\mathbf{1}}_{\{a<0\}}O(|1+w|^{\gamma} |a|^{\beta+1} |b|^{-\alpha}).
\end{equation}
For $w \in \Omega_{\delta}$ we have
$$
|1+w|=O(|(1+w)/(-w)|)=O((a^2+b^2)^{1/2})=O(|b|(1+(a/b)^2)^{1/2}).
$$
When $a<0$ we have $|b|^{-1}=O(|a|^{-3/2})$ (see Lemma \ref{lemma11}(iii)), thus we obtain 
\begin{equation}\label{eqn32}
|1+w|=O(|b| (1+|a|^{-1})^{1/2})=O(|b| \times |a|^{-1/2}). 
\end{equation}
From \eqref{eqn31} and \eqref{eqn32} (and using $|b|^{-1}=O(|a|^{-3/2})$) we estimate for $a<0$
\begin{equation*}
|1+w|^{\gamma} |a|^{\beta+1} |b|^{-\alpha}=|a|^{\beta+1-\gamma/2} |b|^{-\alpha+\gamma}=|a|^{\beta+1-\gamma/2} 
|a|^{-3/2(\alpha-\gamma)}=|a|^{\beta+\gamma+1-3\alpha/2}
\end{equation*}
and this latter quantity is bounded since $|a|=O(1)$ and $\beta+1\ge 3\alpha/2-\gamma$. 
\end{proof}

We leave to the reader the proof of the next result: 
it can be done by induction or using Faa di Bruno's formula. 
\begin{lemma}\label{lemma_Bruno}
For every $k\in\mathbb{N}$ there exist polynomials 
 $\{P_{k,j}(x_1,\ldots,x_k)\}_{1\le j \le k}$ such that for any smooth functions $g$ and $h$
\begin{equation}\label{psi_expansion}
\frac{\d^k}{\d z^k} g(h(z)) = \sum_{j=1}^{k} (h'(z))^{\max(2j-k,0)} \times g^{(j)}(h(z))  \times  P_{k,j}(h'(z),\ldots,h^{(k)}(z)).
\end{equation}
\end{lemma}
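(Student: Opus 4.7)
The plan is to base the proof on the classical Faà di Bruno formula and then extract the required minimal exponent of $h'$ from a simple combinatorial inequality on the indices appearing in the partial Bell polynomials. First, apply Faà di Bruno's formula in its Bell polynomial form: for any smooth $g$ and $h$,
\begin{equation*}
\frac{\d^k}{\d z^k} g(h(z)) = \sum_{j=1}^{k} g^{(j)}(h(z))\, B_{k,j}\bigl(h'(z), h''(z), \ldots, h^{(k-j+1)}(z)\bigr),
\end{equation*}
where $B_{k,j}$ is the $(k,j)$-th partial exponential Bell polynomial. By its explicit definition, every monomial in $B_{k,j}(x_1,\ldots,x_{k-j+1})$ has the form $c_{\mathbf m}\,\prod_{i\ge 1} x_i^{m_i}$ with nonnegative integer exponents satisfying $\sum_{i\ge 1} m_i = j$ and $\sum_{i\ge 1} i\,m_i = k$.

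Next, pin down the minimum possible power of $x_1$, which will correspond to $h'(z)$ after substitution. Subtracting the first constraint from the second gives $\sum_{i\ge 2}(i-1) m_i = k-j$, and since $i-1 \ge 1$ for $i\ge 2$ this forces $\sum_{i\ge 2} m_i \le k-j$. Consequently
\begin{equation*}
m_1 \;=\; j - \sum_{i\ge 2} m_i \;\ge\; j - (k-j) \;=\; 2j-k,
\end{equation*}
and of course $m_1 \ge 0$, so $m_1 \ge \max(2j-k,0)$ for every monomial appearing in $B_{k,j}$. This allows us to factor $x_1^{\max(2j-k,0)}$ out of each term in $B_{k,j}$; define $P_{k,j}(x_1,\ldots,x_k)$ to be the resulting polynomial, regarded as a polynomial in $x_1,\ldots,x_k$ (only the first $k-j+1$ variables actually appear, but we are free to view it as depending on all of them). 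Substituting $x_i = h^{(i)}(z)$ into the Bell polynomial identity then yields the claimed formula.

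The substantive content of the argument is the one-line combinatorial bound $m_1 \ge \max(2j-k,0)$; once this is in hand the factorization is tautological and there is no remaining obstacle. An equivalent proof by induction on $k$ would start from the trivial base case $k=1$, where $P_{1,1}(x_1)=x_1$, and in the inductive step differentiate the formula for $k$ and check that each of the three contributions coming from the product rule (from the factor $(h'(z))^{\max(2j-k,0)}$, from $g^{(j)}(h(z))$ via the chain rule, and from $P_{k,j}(h'(z),\ldots,h^{(k)}(z))$) carries a power of $h'$ at least $\max(2i-(k+1),0)$ for whichever index $i$ of $g^{(i)}(h)$ it produces. This bookkeeping is straightforward and the induction closes without issue, so I would present the Faà di Bruno approach for brevity.
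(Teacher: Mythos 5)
Your proof is correct and follows exactly one of the two routes the paper itself indicates (the paper leaves this lemma to the reader, noting it "can be done by induction or using Faa di Bruno's formula"). The key combinatorial step --- deducing $m_1 \ge \max(2j-k,0)$ from the Bell-polynomial constraints $\sum_i m_i = j$ and $\sum_i i\,m_i = k$ --- is exactly the content the paper omits, and your derivation of it is sound.
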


Now we are ready to complete the proof of Theorem \ref{thm2}. We recall that all that is left to do is to establish the fact stated (in italic font) on page
\pageref{def_Delta3_3}. To simplify notation, we define $\psi(w):= (1+w)^{-3/2}A(w).$
With this notation we have
$$
\frac{\d^k } {\d z^k}\Lambda(w(z);\sigma)=\int_0^{\sigma} \frac{\d^k } {\d z^k}\psi(w(z)(1-x)) \phi(x) \d x.
$$
Invoking Lemma \ref{lemma_Bruno}, we have
\begin{align}\label{eqn33}
&\frac{\d^k } {\d z^k}\Lambda(w(z);\sigma)
= \sum_{j=1}^{k} (w'(z))^{\max(2j-k,0)} \\
\nonumber
& \qquad \times \int_0^{\sigma}\psi^{(j)}(w(z)(1-x)) (1-x)^{\max(2j-k,0)}
 P_{k,j}(w'(z)(1-x),\dots,w^{(k)}(z)(1-x)) \phi(x) \d x.
\end{align}
The function $A(u)$ is analytic in $D_{1}(-1)$. We choose $\sigma>0$ and $\delta>0$ small enough so that 
$w(1-x) \in D_{\frac{1}{2}}(-1)$ for $w\in {\Omega_{\delta}}$ and $x\in (0,\sigma)$ and $|\phi(x)|<C_1 x^{m/2}$ for some $C_1>0$ and all $x\in (0,\sigma)$. 
We compute 
$$
\psi^{(j)}(w(1-x))=\sum\limits_{l=0}^j \binom{j}{l} \times \bigg[ \prod\limits_{i=0}^{l-1} (-3/2-i) \bigg] (1+w(1-x))^{-3/2-l} A^{(j-l)}(w(1-x)).
$$
The terms $A^{(j-l)}(w(1-x))$ are bounded for $w\in {\Omega_{\delta}}$ and $x\in (0,\sigma)$. 
Thus 
$$
|\psi^{(j)}(w(1-x))|=O(|1+w(1-x)|^{-3/2-j}), \;\;\; w\in {\Omega_{\delta}}, \;\;\; x\in (0,\sigma). 
$$
The functions $P_{k,j}(w'(z)(1-x),\dots,w^{(k)}(z)(1-x))$ are bounded for $z\in D_{\delta}(-1) \cap D_1(0)$ and $x\in (0,\sigma)$, since $P_{k,j}$ is a polynomial and $w$ an entire function. We observe that 
$w'(z)=(z+1)e^{z+1}=w(1+z)/z$. This fact coupled with  the result 
$1+w(z)=\frac{1}{2}(z+1)^2+O((z+1)^3)$ (that was proved earlier in Lemma 
\ref{lemma11}) implies that $|w'(z)|=O(|1+w(z)|^{1/2})$ in $D_{\delta}(-1) \cap D_1(0)$. 
Combining all these observations and using \eqref{eqn33} we conclude that there exists $C_2>0$ such that for all $z\in D_{\delta}(-1) \cap D_1(0)$ 
\begin{align}\label{final_eqn}
\Big|\frac{\d^k } {\d z^k}\Lambda(w(z);\sigma)\Big|&<
C_2 \sum_{j=1}^{k} |1+w(z)|^{\max(j-k/2,0)} \int_0^{\sigma}|1+w(z)(1-x)|^{-3/2-j} \phi(x) \d x
\\
\nonumber &=C_1 \times C_2 \sum_{j=1}^{k} |1+w(z)|^{\max(j-k/2,0)} Q_{3/2+j, m/2}(w(z)).
\end{align}
We leave it to the reader to check that if $m=2k+3$ then for all $j=1,2,\dots,k$
$$
m/2+1\ge 3/2+j \qquad {\textnormal{ and }} \qquad m/2+1+\max(j-k/2,0) \ge (3/2) \times (3/2+j).
$$
According to Lemma \ref{lemma12}, each term $|1+w|^{\max(j-k/2,0)} Q_{3/2+j, m/2}(w)$ in \eqref{final_eqn} is bounded when $w\in {\Omega_{\delta}}$, thus $\frac{\d^k } {\d z^k}\Lambda(w(z);\sigma)$ is bounded 
in $D_{\delta}(-1) \cap D_1(0)$.

\qed

\section*{Acknowledgements}
Research was supported by the Natural Sciences and Engineering Research Council of Canada.


\end{document}